\crefname{subsection}{subsection}{subsections}
\numberwithin{equation}{section}
\definecolor{dark-gray}{gray}{0.3}
\newcommand{\R}{\mathbb{R}}
\renewcommand{\d}[1]{\,\mathrm{d}#1}
\renewcommand{\div}{\mathrm{div}}
\newcommand{\I}{\mathrm{I}}
\newcommand{\vertiii}[1]
{{\left\vert\kern-0.25ex\left\vert\kern-0.25ex\left\vert #1 
\right\vert\kern-0.25ex\right\vert\kern-0.25ex\right\vert}}
\newcommand{\Fcal}{\mathcal{F}}
\newcommand{\Tcal}{\mathcal{T}}
\newcommand{\Mcal}{\mathcal{M}}
\newcommand{\RT}{\mathrm{RT}}
\newcommand{\pw}{\mathrm{pw}}
\newcommand{\D}{\mathrm{D}}
\newcommand{\s}{\mathrm{s}}
\newcommand{\M}{\mathbb{M}}
\newcommand{\osc}{\mathrm{osc}}
\newtheorem{theorem}{Theorem}[section]
\newtheorem{lemma}[theorem]{Lemma}
\newtheorem{corollary}[theorem]{Corollary}
\theoremstyle{remark}
\newtheorem{remark}[theorem]{Remark}
\newtheorem{example}[theorem]{Example}
\newcounter{cntS}
\newcommand{\newcnstS}{%
	\refstepcounter{cntS}%
	\ensuremath{c_{\thecntS}}}
\newcommand{\cnstS}[1]{\ensuremath{c_{\ref{#1}}}}
\newcounter{cntL}
\begin{document}

\title[Discrete weak convex duality of HHO methods]{Discrete weak duality of hybrid high-order methods for convex minimization problems}

\author[N.~T.~Tran]{Ngoc Tien Tran}

\thanks{This work has been supported by the European Union's Horizon 2020 research and innovation programme (project DAFNE, grant agreement No.~891734, and project RandomMultiScales, grant agreement No.~865751}

\address[N.~T.~Tran]{%
	Institut f\"ur Mathematik,
	Universit\"at Augsburg,
	86159 Augsburg}
\email{ngoc1.tran@uni-a.de}

\date{\today}

\keywords{discrete weak convex duality, hybrid high-order, convex minimization, a~priori, a~posteriori, adaptive mesh-refining}

\subjclass{65N12, 65N30, 65Y20}

\begin{abstract}
	This paper derives a discrete dual problem for a prototypical hybrid high-order method for convex minimization problems. The discrete primal and dual problem satisfy a weak convex duality that leads to a priori error estimates with convergence rates under additional smoothness assumptions.
	This duality holds for general polyhedral meshes and arbitrary polynomial degrees of the discretization.
	A novel postprocessing is proposed and allows for a~posteriori error estimates on regular triangulations into simplices using primal-dual techniques. This motivates an adaptive mesh-refining algorithm, which performs superiorly compared to uniform mesh refinements.
\end{abstract}

\maketitle

\section{Introduction}

\subsection{Overview}
Given an open bounded polyhedral Lipschitz domain $\Omega \subset \mathbb{R}^n$ with the boundary $\partial \Omega$ and unit normal vector $\nu$, we consider the convex minimization problem of the energy
\begin{align}
	E(v) \coloneqq \int_\Omega (\Psi(\D v) + \psi(x,v)) \d{x} \quad\text{among } v \in V_\mathrm{D} \coloneqq W^{1,p}_\mathrm{D}(\Omega;\mathbb{R}^m),\label{def:energy}
\end{align}
$p \in (1,\infty)$.
Here, $\Psi : \mathbb{M} \to \mathbb{R} \cup \{+ \infty\}$ with $\mathbb{M} \coloneqq \mathbb{R}^{m \times n}$ denotes a proper lower semicontinuous convex energy density, $\psi : \Omega \times \mathbb{R}^m \to \mathbb{R} \cup \{+\infty\}$ is measurable such that $\psi(x, \bullet)$ is a proper lower semicontinuous convex function for a.~e.~$x \in \Omega$, and $V_\mathrm{D} = W^{1,p}_\mathrm{D}(\Omega;\mathbb{R}^m)$ is the space of functions in the Sobolev space $W^{1,p}(\Omega;\mathbb{R}^m)$ with homogenous boundary data on a non-empty closed part $\Gamma_\mathrm{D} \subset \partial \Omega$ of $\partial \Omega$.
Let $\Psi^*$ (resp.~$\psi^*$) denote the convex conjugate of $\Psi$ (resp.~$\psi$ a.e.~in $\Omega$) and $p' \coloneqq p/(p-1) \in (1,\infty)$ with $1/p + 1/p' = 1$ is the H\"older conjugate of $p$.
The dual problem of \eqref{def:energy} maximizes the dual energy
\begin{align}\label{def:dual-energy}
	E^*(\tau) \coloneqq -\int_\Omega (\Psi^*(\tau) + \psi^*(x, \div \tau)) \d{x}
\end{align}
among $\tau \in W_\mathrm{N} \coloneqq W^{p'}_\mathrm{N}(\div,\Omega;\M)$, the Sobolev space of functions $\tau \in L^{p'}(\Omega;\M)$ with a distributional divergence $\div \tau$ belonging to $L^{p'}(\Omega;\R^m)$ and the normal trace $\tau \nu$ vanishes along the Neumann boundary $\Gamma_\mathrm{N} \coloneqq \partial \Omega \setminus \Gamma_\mathrm{D}$ in a weak sense.
The weak duality $E^*(\tau) \leq E(v)$ holds for any $v \in V_\mathrm{D}$ and $\tau \in W_\mathrm{N}$.

It is well-established that duality relations on the discrete level can be utilized in the error analysis of nonconforming methods \cite{CarstensenLiu2015,Bartels2021,Bartels2021b,CarstensenTran2021}.
Only a few methods in the literature are known to preserve the weak duality on the discrete level,
\begin{align}\label{ineq:discrete-duality}
	\max E_h^*(W_\mathrm{N}(\Mcal)) \leq \min E_h(V_\mathrm{D}(\Mcal))
\end{align}
for an appropriate discretization $V_\mathrm{D}(\Mcal)$ of $V_\mathrm{D}$ (resp.~$W_\mathrm{N}(\Mcal)$ of $W_\mathrm{N}$) and discrete energy $E_h$ (resp.~discrete dual energy $E_h^*$).
Historically, \eqref{ineq:discrete-duality} was first known for a Crouzeix-Raviart finite element method (FEM) for the primal problem and a discrete Raviart-Thomas FEM for the dual problem \cite{Marini1985,CarstensenLiu2015,Bartels2021b}.
The generalized Marini identity even leads to the strong duality $\max E_h^*(W_\mathrm{N}(\Mcal)) = \min E_h(V_\mathrm{D}(\Mcal))$ under additional assumptions on $\Psi$ and $\psi$.
For higher-order approximations, the unstabilized hybrid high-order (HHO) method from \cite{AbbasErnPignet2018} and the classical Raviart-Thomas FEM satisfy \eqref{ineq:discrete-duality} without a discrete strong duality in general \cite{CarstensenTran2021}.
Finally, \eqref{ineq:discrete-duality} holds for
the class of lowest-order discontinuous Galerkin (dG) and Raviart-Thomas FEM from \cite{Bartels2021}.
The analysis of all three aforementioned methods requires a regular triangulation of $\Omega$ into simplices (i.e., no hanging nodes are allowed) because they heavily utilize the structure of Crouzeix-Raviart or Raviart-Thomas functions.

\subsection{Motivation and description of main results}
The method from \cite{Bartels2021} stands out as the only one with a stabilization.
The later is required for the design of hybridizable methods, e.g., HHO or hybrid discontinuous Galerkin (HDG) methods, on general polyhedral meshes.
The foundation for establishing \eqref{ineq:discrete-duality} is an integration by parts on the discrete level and the resulting integrals on the skeleton of the mesh determine the stabilizations. In \cite{Bartels2021}, the dG methodology leads to stabilizations that penalize the average of the trace on the primal and the normal trace on the dual level.
These terms do not possess any convergence rates for smooth functions -- an undesired effect that requires overpenalization and projections onto the Crouzeix-Raviart and the Raviart-Thomas finite element space in the error analysis.
Therefore, it only applies to the lowest-order case on regular triangulations into simplices.
Hybridizable methods, on the other hand, provide further flexibility due to additional degrees of freedom on the skeleton, but can they overcome these restrictions?

This paper provides a clear affirmative answer.
In fact, we consider a prototypical HHO method \cite{DiPietroErnLemaire2014,DiPietroErn2015,DiPietroDroniou2020}, which is also related to the HDG methodology \cite{CockburnDiPietroErn2016}.
The ansatz space $V(\mathcal{M}) \coloneqq P_{k+1}(\mathcal{M};\mathbb{R}^m) \times P_k(\Sigma;\mathbb{R}^m)$ features a split of degrees of freedom into piecewise polynomials of degree at most $k+1$ on the (polyhedral) mesh $\mathcal{M}$ and piecewise polynomials of degree at most $k$ on the skeleton $\Sigma$, the set of faces of $\mathcal{M}$.
The discrete scheme replaces the gradient in \eqref{def:energy} by a reconstruction operator $\D_h : V(\mathcal{M}) \to P_k(\mathcal{M};\mathbb{M})$. In contrast to \cite{CarstensenTran2021}, this operator has a non-trivial kernel and so, a stabilization $\s_h : V(\Mcal) \to \mathbb{R}_{\geq 0}$ is required.
This paper utilizes the nonlinear version of the Lehrenfeld-Sch\"oberl stabilization, but the analysis can be extended to the classical HHO stabilization as well.
Given $r \in (1,\infty)$ with  H\"older conjugate $r' = r/(r-1)$, the discrete primal problem minimizes
\begin{align}\label{def:discrete-energy}
	E_h(v_h) \coloneqq \int_\Omega (\Psi(\D_h v_h) + \psi_h(x, \Pi_\Mcal^k v_\mathcal{M})) \d{x} + \s_h(v_h)/r
\end{align}
among $v_h = (v_\mathcal{M},v_\Sigma) \in V_\mathrm{D}(\mathcal{M})$,
where $V_\mathrm{D}(\mathcal{M}) \subset V(\Mcal)$ is a discrete analogue to $V_\mathrm{D}$ and $\psi_h : \Omega \times \mathbb{R}^m \to \mathbb{R} \cup \{+\infty\}$ is an approximation of $\psi$.
Our analysis departs from a discrete integration by parts involving three different operators: a gradient reconstruction $\D_h : V(\Mcal) \to P_k(\Mcal;\M)$, a divergence reconstruction $\div_h : W(\mathcal{M}) \to P_k(\mathcal{M};\mathbb{R}^m)$, which maps from the ansatz space $W(\mathcal{M}) \coloneqq P_k(\mathcal{M};\M) \times P_k(\Sigma;\mathbb{R}^m)$ of $W^{p'}(\div,\Omega;\M)$ onto the space of piecewise polynomials of degree $k$, and a potential reconstruction $\mathcal{R}_h^* : W(\Mcal) \to P_k(\Mcal;\M)$ in the spirit of \cite{DiPietroDroniouRapetti2020,DiPietroDroniou2023} on the dual level.
This leads to a discrete dual problem that maximizes
\begin{align}\label{def:discrete-dual-energy}
	E_h^*(\tau_h) \coloneqq -\int_\Omega (\Psi^*(\mathcal{R}^*_h \tau_h) + \psi_h^*(x,\div_h \tau_h)) \d{x} - \gamma_h(\tau_h)/r'
\end{align}
among $\tau_h = (\tau_\mathcal{M},\tau_\Sigma) \in W_\mathrm{N}(\mathcal{M})$
with a suitable stabilization $\gamma_h : W(\mathcal{M}) \to \mathbb{R}_{\geq 0}$ and a discrete analogue $W_\mathrm{N}(\mathcal{M}) \subset W(\Mcal)$ of $W_\mathrm{N}$.
We establish that $E_h$ from \eqref{def:discrete-energy} and $E_h^*$ from \eqref{def:discrete-dual-energy} satisfy \eqref{ineq:discrete-duality}.
This duality leads to a~priori error estimates with convergence rates under suitable smoothness assumptions.
Apart from a~priori results, this paper proposes a novel $W^{p'}(\div,\Omega,\M)$ conforming postprocessing $\sigma_0$ for the approximation of the dual variable on regular triangulations into simplices. 
Its construction is based on the discrete Euler-Lagrange equations, which allow for the concept of equilibrated tractions in the HHO methodology \cite{DiPietroDroniou2017,AbbasErnPignet2018}.
This is a cheap alternative to equilibrium techniques \cite{BraessSchoeberl2008,ErnVohralik2015,ErnVohralik2020} in the linear case and
can be used to obtain a~posteriori error estimates via primal-dual gap similar to \cite{Repin2000,Bartels2015,CarstensenLiu2015,CarstensenTran2021,BartelsKaltenbach2023}.

\subsection{Notation}
Standard notation for Sobolev and Lebesgue spaces applies throughout this paper with the abbreviation $V \coloneqq W^{1,p}(\Omega;\R^m)$, $W \coloneqq W^{p'}(\div,\Omega;\M)$, and $\|\bullet\|_p \coloneqq \|\bullet\|_{L^p(\Omega)}$ for any $p \in (1,\infty)$.
In particular, the gradient of scalar-valued (resp.~vector- or matrix-valued) functions is denoted by $\nabla$ (resp.~$\D$).
For any open bounded domain $\omega$, $c_\mathrm{P}(\omega)$ is the constant in the Poincar\'e inequality $\|v\|_{L^p(\omega)} \leq c_\mathrm{P}(\omega)\mathrm{diam}(\omega) \|\nabla v\|_{L^p(\omega)}$ for any $v \in W^{1,p}(\omega)$ with $\int_\omega v \d{x} = 0$.
If $\omega$ is convex, then $c_\mathrm{P}(\omega) \leq 2(p/2)^{1/p}$ \cite[Theorem 1.2]{ChuaWeeden2006} for $1 < p < \infty$ and $c_\mathrm{P}(\omega) = 1/\pi$ for $p = 2$ \cite{Bebendorf2003}.
Given $A, B \in \M = \R^{m \times n}$, $A:B$ denotes the Euclidean scalar product of $A$ and $B$, which induces the Frobenius norm $|A| \coloneqq (A:A)^{1/2}$ in $\M$.
The subdifferential $\partial \varphi(A)$ of a convex function $\varphi : \M \to \R \cup \{+\infty\}$ at $A \in \mathbb{M}$ is
\begin{align}\label{def:subdifferential}
	\partial \varphi(A) \coloneqq \{G \in \mathbb{M} : 0 \leq \varphi(B) - \varphi(A) - G:(B - A) \text{ for any } B \in \mathbb{M}\}.
\end{align}
The convex conjugate $\varphi^* : \M \to \mathbb{R} \cup \{+\infty\}$ of $\varphi$ is defined by
\begin{align*}
	\varphi^*(G) \coloneqq \sup_{A \in \M} (A : G - \varphi(A)) \quad\text{for any } G \in \M.
\end{align*}
Given $g \in \R^m$, the (convex) indicator function $\chi_{g}(a)$ is $0$ if $a = g$ and $+\infty$ if $a \neq g$.
The notation $A \lesssim B$ abbreviates $A \leq CB$ for a generic constant $C$ independent of the mesh-size and $A \approx B$ abbreviates $A \lesssim B \lesssim A$.

\subsection{Outline}
The remaining parts of this paper are organized as follows.
\Cref{sec:skeletal-method} provides detailed information on the discretization including reconstruction operators and stabilizations utilized in \eqref{def:discrete-energy}--\eqref{def:discrete-dual-energy} and throughout this paper.
The discrete weak duality \eqref{ineq:discrete-duality} is established in \Cref{sec:discrete-convex-duality}.
\Cref{sec:a-priori} is devoted to the application of \eqref{ineq:discrete-duality} in the a~priori error analysis, while a~posteriori error estimates are established in \Cref{sec:a-posteriori}.
Some numerical benchmarks conclude this paper.

\section{The HHO methodology}\label{sec:skeletal-method}
This section recalls the HHO methodology from \cite{DiPietroErnLemaire2014,DiPietroErn2015,DiPietroDroniou2020}.

\subsection{Polytopal Mesh}\label{sec:triangulation}
Let $\mathcal{M}$ be a finite collection of closed polytopes of positive volume with overlap of measure zero that covers $\overline{\Omega} = \cup_{K \in \mathcal{M}} K$.
A face $S$ of the mesh $\mathcal{M}$ is a closed connected subset of a hyperplane $H_S$ with positive $(n-1)$-dimensional surface measure such that either (a) there exist $K_+,K_- \in \mathcal{M}$ with $S \subset H_S \cap K_+ \cap K_-$ (interior face) or (b) there exists $K_+ \in \mathcal{M}$ with $S \subset H_S \cap K_+ \cap \partial \Omega$ (boundary face).
Let $\Sigma$ be a finite collection of faces with overlap of $(n-1)$-dimensional surface measure zero that covers the skeleton $\partial \Mcal \coloneqq \cup_{K \in \mathcal{M}} \partial K = \cup_{S \in \Sigma} S$ with the split $\Sigma = \Sigma(\Omega) \cup \Sigma(\partial \Omega)$ into the set of interior faces $\Sigma(\Omega)$ and the set of boundary faces $\Sigma(\partial \Omega)$.
Let $\Sigma_\mathrm{D} \coloneqq \{S \in \Sigma: S \subset \Gamma_\mathrm{D}\}$ (resp.~$\Sigma_\mathrm{N} \coloneqq \Sigma(\partial\Omega) \setminus \Sigma_\mathrm{D}$) denote the set of Dirichlet (resp.~Neumann) faces. For $K \in \mathcal{M}$, $\Sigma(K)$ is the set of all faces of $K$.
The normal vector $\nu_S$ of an interior face $S \in \Sigma(\Omega)$ is fixed beforehand and set $\nu_S \coloneqq \nu|_S$ for boundary faces $S \in \Sigma(\partial \Omega)$.
For $S \in \Sigma(\Omega)$, $K_+(S) \in \Mcal$ (resp.~$K_-(S) \in \Mcal$) denotes the unique cell with $S \subset \partial K_{+}(S)$ (resp.~$S \subset \partial K_-(S)$) and $\nu_{K_+(S)}|_S = \nu_S$ (resp.~$\nu_{K_-(S)}|_S = -\nu_S$).
For $S \in \Sigma(\partial \Omega)$, $K_+(S) \in \Mcal$ is the unique cell with $S \subset \partial K_+(S)$.
The explicit reference to $S$ is omitted if there is no likelihood of confusion.
The jump $[v]_S$ and the average $\{v\}_S$ of any function $v \in W^{1,1}(\mathrm{int}(T_+ \cup T_-);\R^m)$ along $S \in \Sigma(\Omega)$ are defined by $[v]_S \coloneqq v|_{K_+} - v|_{K_-} \in L^1(S;\R^m)$ and $\{v\}_S \coloneqq (v|_{K_+} + v|_{K_-})/2 \in L^1(S;\R^m)$.
If $S \in \Sigma(\partial \Omega)$, then $[v]_S \coloneqq v|_S = v_{K_+}|_S \eqqcolon \{v\}_S$.

For theoretical purposes, let $\vartheta$ denote the mesh regularity parameter of $\Mcal$ associated with a matching simplicial submesh, cf.~\cite[Definition 1.38]{DiPietroErn2012} for a precise definition. The constants in discrete inequalities such as the trace or inverse inequality depend on this parameter.
The notation $c_\mathrm{P} \coloneqq \max_{K \in \Mcal} c_\mathrm{P}(K)$ denotes the best possible local Poincar\'e constant on $\Mcal$. For general polyhedral meshes, $c_\mathrm{P}$ may depend on $\vartheta$.
The differential operators $\D_\pw$ and $\div_\pw$ denote the piecewise version of $\D$ and $\div$ without explicit reference to the underlying mesh.

\subsection{Finite element spaces}\label{sec:fem-spaces}
Given a subset $M \subset \R^n$ of diameter $h_M$, let $P_k(M)$ denote the space of polynomials of degree at most $k$.
The vector- and matrix-valued version of $P_k(M)$ read $P_k(M;\mathbb{R}^m) \coloneqq P_k(M)^m$ and $P_k(M;\mathbb{M}) \coloneqq P_k(M;\mathbb{R}^n)^m$.
This notation applies to other spaces as well.
For any $v \in L^1(M)$, $\Pi_M^k v \in P_k(M)$ denotes the $L^2$ projection of $v$ onto $P_k(M)$. The space of piecewise polynomials of degree at most $k$ with respect to the mesh $\mathcal{M}$ or the faces $\Sigma$ is denoted by $P_k(\mathcal{M})$ or $P_k(\Sigma)$.
Given $v \in L^1(\Omega)$, we define the $L^2$ projection $\Pi_{\Mcal}^k v$ of $v$ onto $P_k(\Mcal)$ by $(\Pi_{\Mcal}^k v)|_K = \Pi_K^k v|_K$ in any cell $K \in \Mcal$.
We recall the stability of the $L^2$ projection $\|\Pi_\Mcal^k \xi\|_p \lesssim \|\xi\|_p$ for any $\xi \in L^p(\Omega)$
from, e.g., \cite[Lemma 3.2]{DiPietroDroniou2017}. A consequence of this is the quasi-best approximation
\begin{align}\label{ineq:quasi-opt-L2}
	\|(1 - \Pi_{\Mcal}^k) \xi\|_p \leq \|\xi - \varphi_k\|_p + \|\Pi_{\Mcal}^k(\varphi_k - \xi)\|_p \lesssim \|\xi - \varphi_k\|_p
\end{align}
for any $\varphi_{k} \in P_k(\Mcal)$, where $\varphi_{k} = \Pi_\Mcal^k \varphi_k$ is utilized in the penultimate inequality.

For a simplex $T$, $\RT_k(T) \coloneqq P_k(T;\R^n) + xP_k(T)$ denotes the space of Raviart-Thomas finite element functions. The piecewise version of this on a regular triangulation $\Tcal$ into simplices is called $\RT^\pw_k(\Tcal)$, while $\RT_k(\Tcal) \coloneqq \RT_k^\pw(\Tcal) \cap W$ denotes the space of globally $W^{p'}(\div,\Omega)$ Raviart-Thomas functions.
The mesh $\Mcal$ gives rise to the piecewise constant function $h_\Mcal \in P_0(\Mcal)$ with $h_\Mcal|_K = h_K$; $h_{\max} \coloneqq \max_{K \in \Mcal} h_K$ is the maximal mesh-size of $\Mcal$.
For any $f \in L^{p'}(\Omega;\R^m)$, the oscillation of $f$ reads $\osc(f,\Mcal) \coloneqq \|h_\Mcal(1 - \Pi_\Mcal^k) f\|_{p'}$.

\subsection{Discrete ansatz spaces}\label{sec:discrete-spaces}
Given a fixed $k \in \mathbb{N}_0$, let $V(\mathcal{M}) \coloneqq P_{k+1}(\mathcal{M};\R^m) \times P_k(\Sigma;\R^m)$ (resp.~$W(\mathcal{M}) \coloneqq P_k(\mathcal{M};\M) \times P_k(\Sigma;\R^m)$) denote the discrete ansatz space for $V$ (resp.~$W$).
For any $v_h = (v_\Mcal, v_\Sigma)$, we use the notation $v_K \coloneqq v_\Mcal|_K$ and $v_S \coloneqq v_\Sigma|_S$ to abbreviate the restriction of $v_\Mcal$ in a cell $K \in \Mcal$ and $v_\Sigma$ along a face $S \in \Sigma$.
The same notation applies to discrete objects in $W(\Mcal)$.
The set of Dirichlet faces $\Sigma_\mathrm{D}$ (resp.~Neumann faces $\Sigma_\mathrm{N}$) gives rise to the subspace $P_k(\Sigma \setminus \Sigma_\mathrm{D};\R^m)$ (resp.~$P_k(\Sigma \setminus \Sigma_\mathrm{N};\R^m)$) of $P_k(\Sigma;\R^m)$ with the convention $v_\Sigma \in P_k(\Sigma \setminus \Sigma_\mathrm{D};\R^m)$ (resp.~$\tau_\Sigma \in P_k(\Sigma \setminus \Sigma_\mathrm{N};\R^m)$) if $v_S \equiv 0$ (resp.~$\tau_S \equiv 0$) on Dirichlet faces $S \in \Sigma_\mathrm{D}$ (resp.~Neumann faces $S \in \Sigma_\mathrm{N}$) to model homogenous Dirichlet boundary condition (resp.~vanishing normal traces).
This defines the discrete spaces $V_\mathrm{D}(\mathcal{M}) \coloneqq P_{k+1}(\mathcal{M};\mathbb{R}^m) \times P_k(\Sigma \setminus \Sigma_\mathrm{D}; \mathbb{R}^m)$ and $W_\mathrm{N}(\mathcal{M}) \coloneqq P_k(\mathcal{M};\mathbb{M}) \times P_k(\Sigma \setminus \Sigma_\mathrm{N};\mathbb{R}^m)$ as subspaces of $V(\Mcal)$ and $W(\Mcal)$.
The interpolation operators $\I_V: V \to V(\mathcal{M})$ and $\I_W : W^{1,1}(\Omega;\M) \to W(\Mcal)$ map $v \in V$ to $\I_V v \coloneqq (\Pi_\Mcal^{k+1} v, (\Pi_S^k v)_{S \in \Sigma}) \in V(\Mcal)$ and $\tau \in W^{1,1}(\Omega;\M)$ to $\I_W \tau \coloneqq (\Pi_\Mcal^k \tau, (\Pi_S^k \tau \nu_S)_{S\in \Sigma}) \in W(\Mcal)$.

\subsection{Reconstruction operators and stabilization}
The reconstruction operators $\D_h$ and $\div_h$ provide an approximation of the differential operators $\D$ and $\div$.\\[-0.8em]
\paragraph{\emph{Gradient reconstruction.}} The gradient reconstruction $\D_h v_h \in P_k(\Mcal;\M)$ of $v_h = (v_\Mcal,v_\Sigma) \in V(\Mcal)$ is the unique solution to, for all $\tau_k \in P_k(\Mcal;\M)$,
\begin{align}
	\int_\Omega \D_h v_h : \tau_k \d{x} = - \int_\Omega v_\Mcal \cdot \div_\pw \tau_k \d{x} + \sum_{S \in \Sigma} \int_S v_S \cdot [\tau_k \nu_S]_S \d{s}.
	\label{def:gradient-reconstruction}
\end{align}
%In other words, $\D_h v_h \in P_k(\Mcal;\M)$ is the Riesz-representation of the linear functional on the right-hand side of \eqref{def:gradient-reconstruction} in the discrete space $P_k(\Mcal;\M)$ endowed with the $L^2$ scalar product.\\[-0.8em]

\paragraph{\emph{Divergence reconstruction.}} The divergence reconstruction $\div_h \tau_h \in P_k(\Mcal;\R^m)$ of $\tau_h = (\tau_\Mcal,\tau_\Sigma) \in W(\Mcal)$ is the unique solution to, for all $v_k \in P_k(\Mcal;\R^m)$,
\begin{align}
	\int_\Omega \div_h \tau_h \cdot \varphi_{k} \d{x} = - \int_\Omega \D_\pw \varphi_{k} : \tau_\Mcal \d{x} + \sum_{S \in \Sigma} \int_S [\varphi_k]_S \cdot \tau_S \d{s}.
	\label{def:divergence-reconstruction}
\end{align}
%Similar to the gradient reconstruction $\D_h$, $\div_h \tau_h$ is the Riesz presentation of the linear functional on the right-hand side of \eqref{def:divergence-reconstruction} in $P_k(\Mcal;\R^m)$.
%The potential reconstruction operator on the primal level will not be part of the discrete scheme, but is utilized in the error analysis.\\[-0.8em]

\paragraph{\emph{Potential reconstruction}}
The potential reconstruction $\mathcal{R}_h v_h \in P_{k+1}(\Mcal;\R^m)$ of $v_h = (v_\Mcal, v_\Sigma) \in V(\Mcal)$ satisfies, for any $\varphi_{k+1} \in P_{k+1}(\Mcal; \R^m)$, that
\begin{align}\label{def:pot-rec-1}
	&\int_\Omega \D_\pw \mathcal{R}_h v_h : \D_\pw \varphi_{k+1} \d{x}\nonumber\\
	&\qquad = - \int_\Omega v_\Mcal \cdot \Delta_\pw \varphi_{k+1} \d{x} + \sum_{S \in \Sigma} \int_S v_S \cdot [\D_\pw \varphi_{k+1} \nu_S]_S \d{s}.
\end{align}
This defines $\mathcal{R}_h v_h$ uniquely up to piecewise constants, which are fixed by
\begin{align}\label{def:pot-rec-2}
	\int_K \mathcal{R}_h v_h \d{x} = \int_K v_K \d{x} \quad\text{for any } K \in \Mcal.
\end{align}
The following lemma states the characteristic commuting property of the reconstruction operators in the HHO methodology \cite{DiPietroErnLemaire2014,DiPietroErn2015,DiPietroDroniou2020}.
\begin{lemma}[commuting property]\label{lem:commuting-property}
	Any $v \in V$ and $\tau \in W^{1,1}(\Omega;\M)$ satisfy $\D_h \I_{V} v = \Pi_\Mcal^k \D v$, $\div_h \I_W \tau = \Pi_\Mcal^k \div \tau$, and the orthogonality $\D_\pw(v - \mathcal{R}_h \I_V v) \perp \D_\pw P_{k+1}(\Mcal;\R^m)$ with respect to the $L^2$ scalar product.
\end{lemma}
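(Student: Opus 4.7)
My plan is to treat all three claims in a uniform way: substitute the interpolate into the defining equation of the relevant reconstruction, use the $L^2$-projection property to remove the projections in the test-function integrals (this works whenever the test quantity has polynomial degree within the projection's range), and finally apply a cell-wise integration by parts to identify the result with the intended projection of the continuous differential operator. The main bookkeeping step, present in each case, is to verify that all contributions from cell boundaries reassemble into the jump/average sums appearing in \eqref{def:gradient-reconstruction}, \eqref{def:divergence-reconstruction} and \eqref{def:pot-rec-1}. This is the only non-routine point and, because $v \in V$ and $\tau \in W^{1,1}(\Omega;\M)$ possess full inter-element continuity, the rearrangement is a routine single-face computation using the sign convention $\nu_{K_\pm}|_S = \pm \nu_S$.

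For the gradient reconstruction, I would evaluate \eqref{def:gradient-reconstruction} at $\I_V v = (\Pi_\Mcal^{k+1} v, (\Pi_S^k v)_{S \in \Sigma})$ tested against an arbitrary $\tau_k \in P_k(\Mcal;\M)$. Since $\div_\pw \tau_k$ is piecewise polynomial of degree at most $k$ and each $[\tau_k \nu_S]_S$ is polynomial of degree at most $k$ on $S$, the $L^2$-projections on $\Pi_\Mcal^{k+1} v$ and $\Pi_S^k v$ can be dropped, yielding exactly $-\int_\Omega v \cdot \div_\pw \tau_k \d{x} + \sum_{S}\int_S v \cdot [\tau_k \nu_S]_S \d{s}$. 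A piecewise integration by parts on the first term produces $\int_\Omega \D v : \tau_k \d{x}$ plus boundary contributions that cancel the skeletal sum (because $v$ has no jumps). This identifies the left-hand side with $\int_\Omega \Pi_\Mcal^k \D v : \tau_k \d{x}$, and the uniqueness in \eqref{def:gradient-reconstruction} concludes $\D_h \I_V v = \Pi_\Mcal^k \D v$.

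The divergence identity follows by the dual mechanism: testing \eqref{def:divergence-reconstruction} with $\I_W \tau = (\Pi_\Mcal^k \tau, (\Pi_S^k \tau \nu_S)_{S \in \Sigma})$ against $\varphi_k \in P_k(\Mcal;\R^m)$, the projections can be dropped because $\D_\pw \varphi_k$ has piecewise degree $\leq k$ and $[\varphi_k]_S$ has face-degree $\leq k$. A cell-wise integration by parts then yields $\int_\Omega \varphi_k \cdot \div \tau \d{x}$ after the boundary terms cancel the skeleton sum (now using that the normal trace $\tau \nu_S$ is single-valued because $\tau \in W^{1,1}(\Omega;\M)$), which equals $\int_\Omega \varphi_k \cdot \Pi_\Mcal^k \div \tau \d{x}$; the uniqueness in \eqref{def:divergence-reconstruction} finishes this part.

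For the orthogonality of the potential reconstruction, I would evaluate \eqref{def:pot-rec-1} at $\I_V v$ against a generic $\varphi_{k+1} \in P_{k+1}(\Mcal;\R^m)$. Since $\Delta_\pw \varphi_{k+1}$ is piecewise polynomial of degree $\leq k+1$ and the normal jumps $[\D_\pw \varphi_{k+1} \nu_S]_S$ have degree $\leq k$ on each face, the projections in $\Pi_\Mcal^{k+1} v$ and $\Pi_S^k v$ are again absorbed. Two piecewise integrations by parts applied to $\int_\Omega \D v : \D_\pw \varphi_{k+1} \d{x}$ produce exactly the same expression, and one reads off $\int_\Omega \D_\pw(v - \mathcal{R}_h \I_V v) : \D_\pw \varphi_{k+1} \d{x} = 0$. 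The only subtlety is that \eqref{def:pot-rec-1} determines $\mathcal{R}_h \I_V v$ only modulo piecewise constants, but piecewise constants lie in the $\D_\pw$-kernel and so do not affect the orthogonality; the normalisation \eqref{def:pot-rec-2} plays no role here.
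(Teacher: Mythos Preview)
Your argument is correct and is precisely the standard derivation of the HHO commuting identities; the paper itself does not supply a proof of this lemma but refers to the HHO literature \cite{DiPietroErnLemaire2014,DiPietroErn2015,DiPietroDroniou2020}, where one finds exactly the mechanism you describe (insert the interpolate, remove the $L^2$ projections because the test quantities have the right polynomial degree, integrate by parts cell-wise, and use the inter-element continuity of $v$ resp.\ $\tau$ to cancel the skeletal terms). Two cosmetic points: in the third part a \emph{single} piecewise integration by parts of $\int_\Omega \D v : \D_\pw \varphi_{k+1}\,\d{x}$ already produces $-\int_\Omega v\cdot \Delta_\pw \varphi_{k+1}\,\d{x}$ plus the face contributions, so ``two'' is one too many; and $\Delta_\pw \varphi_{k+1}$ is in fact piecewise of degree at most $k-1$, although your weaker bound is all that is needed to drop $\Pi_\Mcal^{k+1}$.
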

The key for establishing the discrete weak duality \eqref{ineq:discrete-duality} is the design of a reconstruction operator on the dual level in the spirit of \cite{DiPietroDroniouRapetti2020,DiPietroDroniou2023}.
The latter can be computed explicitly, although in this paper, it is only used for theoretical purposes.\\[-0.8em]
\paragraph{\emph{Dual potential reconstruction}}
Let $Z(\mathcal{M})$ denote the $L^2$ orthogonal complement of $\D_\pw P_{k+1}(\mathcal{M};\mathbb{R}^m)$ in $P_k(\mathcal{M}; \mathbb{M})$,
\begin{align}\label{def:orthogonal-decomposition}
	P_k(\mathcal{M}; \mathbb{M}) = \D_\pw P_{k+1}(\mathcal{M};\mathbb{R}^m) \oplus Z(\mathcal{M}),
\end{align}
and $\Pi_{Z(\Mcal)}$ is the $L^2$ projection that maps from $P_k(\mathcal{M}; \mathbb{M})$ onto $Z(\Mcal)$.
The dual potential reconstruction $\mathcal{R}_h^* \tau_h \in P_k(\mathcal{M}; \mathbb{M})$ of $\tau_h = (\tau_\Mcal,\tau_\Sigma) \in W(\Mcal)$ is 
the unique function in $P_k(\Mcal;\M)$ that satisfies
\begin{align}\label{def:dual-pot-rec-1}
	\int_\Omega \mathcal{R}_h^* \tau_h : \D_\pw v_{k+1} \d{x} = - \int_\Omega v_{k+1} \cdot \div_h \tau_h \d{x} + \sum_{S \in \Sigma} \int_S [v_{k+1}]_S \cdot \tau_S \d{s}
\end{align}
for any $v_{k+1} \in P_{k+1}(\Mcal;\R^m)$
and the projection property
\begin{align}\label{def:dual-pot-rec-2}
	\Pi_{Z(\Mcal)} \mathcal{R}_h^* \tau_h = \Pi_{Z(\Mcal)} \tau_\Mcal.
\end{align}
\begin{lemma}[well-definedness of $\mathcal{R}^*_h$]
	For any $\tau_h = (\tau_\Mcal,\tau_\Sigma) \in W(\Mcal)$, there exists a unique function $\mathcal{R}_h^* \tau_h \in P_k(\mathcal{M}; \mathbb{M})$ with \eqref{def:dual-pot-rec-1}--\eqref{def:dual-pot-rec-2}.
\end{lemma}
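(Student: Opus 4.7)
The plan is to exploit the $L^2$ orthogonal decomposition \eqref{def:orthogonal-decomposition} in order to separate the two defining conditions \eqref{def:dual-pot-rec-1} and \eqref{def:dual-pot-rec-2}. Since any candidate $\mathcal{R}_h^* \tau_h \in P_k(\Mcal;\M)$ admits a unique decomposition $\mathcal{R}_h^* \tau_h = \D_\pw \varphi_{k+1} + z$ with $\varphi_{k+1} \in P_{k+1}(\Mcal;\R^m)$ (modulo piecewise constants) and $z \in Z(\Mcal)$, the projection condition \eqref{def:dual-pot-rec-2} forces $z = \Pi_{Z(\Mcal)}\tau_\Mcal$ and the remaining degrees of freedom $\D_\pw \varphi_{k+1}$ must be determined by \eqref{def:dual-pot-rec-1}.

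First, I would observe that $z \in Z(\Mcal)$ is $L^2$-orthogonal to $\D_\pw P_{k+1}(\Mcal;\R^m)$, so testing \eqref{def:dual-pot-rec-1} against $v_{k+1} \in P_{k+1}(\Mcal;\R^m)$ reduces the left-hand side to $\int_\Omega \D_\pw \varphi_{k+1} : \D_\pw v_{k+1} \d{x}$. This turns \eqref{def:dual-pot-rec-1} into a piecewise Neumann-type variational problem
\[
	a(\varphi_{k+1}, v_{k+1}) \coloneqq \int_\Omega \D_\pw \varphi_{k+1} : \D_\pw v_{k+1} \d{x} = \ell(v_{k+1})
\]
for all $v_{k+1} \in P_{k+1}(\Mcal;\R^m)$, with $\ell(v_{k+1}) \coloneqq -\int_\Omega v_{k+1} \cdot \div_h \tau_h \d{x} + \sum_{S \in \Sigma} \int_S [v_{k+1}]_S \cdot \tau_S \d{s}$. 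The bilinear form $a$ has kernel $P_0(\Mcal;\R^m)$, so solvability requires $\ell$ to vanish on piecewise constants.

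The main (and essentially only) step that requires a verification is this compatibility condition. I would plug an arbitrary $v_0 \in P_0(\Mcal;\R^m) \subset P_k(\Mcal;\R^m)$ into the definition \eqref{def:divergence-reconstruction} of $\div_h$; since $\D_\pw v_0 = 0$, this yields $\int_\Omega v_0 \cdot \div_h \tau_h \d{x} = \sum_{S \in \Sigma} \int_S [v_0]_S \cdot \tau_S \d{s}$, so that $\ell(v_0) = 0$ exactly. Consequently, $a$ is coercive on the quotient $P_{k+1}(\Mcal;\R^m)/P_0(\Mcal;\R^m)$ (by a cell-wise Poincar\'e inequality), and Lax-Milgram produces a unique $\varphi_{k+1}$ modulo piecewise constants, hence a unique $\D_\pw \varphi_{k+1}$.

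Combining both parts, $\mathcal{R}_h^* \tau_h \coloneqq \D_\pw \varphi_{k+1} + \Pi_{Z(\Mcal)} \tau_\Mcal$ is the unique element of $P_k(\Mcal;\M)$ satisfying \eqref{def:dual-pot-rec-1}--\eqref{def:dual-pot-rec-2}. The only substantive ingredient is the compatibility check, which is precisely the built-in consistency between $\div_h$ and the skeleton terms in \eqref{def:dual-pot-rec-1}; everything else is linear algebra on the orthogonal sum \eqref{def:orthogonal-decomposition}.
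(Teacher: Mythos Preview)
Your proposal is correct and follows essentially the same route as the paper: decompose $\mathcal{R}_h^*\tau_h$ along \eqref{def:orthogonal-decomposition}, fix the $Z(\Mcal)$-component via \eqref{def:dual-pot-rec-2}, verify the compatibility $\ell(v_0)=0$ for $v_0\in P_0(\Mcal;\R^m)$ from \eqref{def:divergence-reconstruction}, and solve for $\D_\pw\varphi_{k+1}$ in the quotient $P_{k+1}(\Mcal;\R^m)/P_0(\Mcal;\R^m)$. The only cosmetic difference is that the paper invokes the Riesz representation in the piecewise energy scalar product rather than Lax--Milgram with a Poincar\'e inequality, but this is the same argument.
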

\begin{proof}
	Let $\Phi(\tau_h) \in P_{k+1}(\Mcal;\R^m)/P_0(\Mcal;\R^m)$ be the unique solution to
	\begin{align}\label{def:dual-pot-rec-ibp}
		\int_\Omega \D_\pw \Phi(\tau_h) : \D_\pw v_{k+1} \d{x} = - \int_\Omega v_{k+1} \cdot \div_h \tau_h \d{x} + \sum_{S \in \Sigma} \int_S [v_{k+1}]_S \cdot \tau_S \d{s}
	\end{align}
	for any $v_{k+1} \in P_{k+1}(\Mcal;\R^m)/P_0(\Mcal;\R^m)$. In fact, the right-hand side of \eqref{def:dual-pot-rec-1} vanishes for any piecewise constant test function $v_{k+1} \in P_0(\Mcal;\R^m)$ thanks to the definition \eqref{def:divergence-reconstruction} of $\div_h$. Therefore, it is a (well-defined) linear functional in the quotient space $P_{k+1}(\Mcal;\R^m)/P_0(\Mcal;\R^m)$.
	Thus, $\Phi(\tau_h)$ is the Riesz representation of this linear functional in $P_{k+1}(\Mcal;\R^m)/P_0(\Mcal;\R^m)$ equipped with the piecewise energy scalar product.
	Hence, $\mathcal{R}_h^* \tau_h \coloneqq \D_\pw \Phi(\tau_h) + \Pi_{Z(\Mcal)} \tau_\Mcal$ satisfies \eqref{def:dual-pot-rec-1}--\eqref{def:dual-pot-rec-2}. The uniqueness follows from the uniqueness of $\Phi(\tau_h)$.
\end{proof}

%\begin{remark}[relation to Raviart-Thomas functions]
%	Suppose that $\Mcal$ is a triangulation of $\Omega$ into simplices. Given a Raviart-Thomas function $\tau_\RT \in \RT_k(\Mcal;\M)$, then the interpolation $\I_W \tau_\RT \coloneqq $ \Cref{lem:commuting-property} and $\div \tau_\RT \in P_k(\Mcal;\R^m)$ imply $\div_h \I_W \tau_\RT = \Pi_\Mcal^k \div \tau_\RT = \div \tau_\RT$.
%	This, \eqref{def:dual-pot-rec-ibp}, $\tau_\RT \nu_S \in P_k(S;\R^m)$ for any $S \in \Sigma$, and an piecewise integration by parts imply
%	\begin{align*}
%		\int_\Omega \mathcal{R}^*_h \I_W \tau_\RT : \D_\pw v_{k+1} \d{x} &= - \int_\Omega v_{k+1} \cdot \div \tau_\RT + \sum_{S \in \Sigma} \int_S [v_{k+1}]_S \cdot \tau_\RT \nu_S \d{s}\\
%		&= \int_\Omega \tau_\RT : \D_\pw v_{k+1} \d{x}
%	\end{align*}
%	for any test function $v_{k+1} \in P_{k+1}(\Mcal;\R^m)$.
%	In other words, $\mathcal{R}^*_h \I_W \tau_\RT - \tau_\RT \perp \D_\pw P_{k+1}(\Mcal)$.
%	The combination of this with \eqref{def:dual-pot-rec-2} concludes $\mathcal{R}_h^* \tau_h = \Pi_{\Mcal}^k \tau_\RT$.
%\end{remark}

\paragraph{\emph{Stabilization}}
The kernel of $\D_h$ is non-trivial because any $v_h = (v_\Mcal,0) \in V_\mathrm{D}(\Mcal)$ with $v_\Mcal \perp P_{k-1}(\Mcal;\R^m)$ satisfies $\D_h v_h \equiv 0$, but $v_h$ may not be zero. Hence, $\|\D_h \bullet\|_p$ is not a norm in $V_\mathrm{D}(\Mcal)$ and so, a stabilization $\s_h$ is required.
Fix the parameters $r \in (1,\infty)$ and $s \in \mathbb{R}$.
Given any $u_h = (u_\Mcal,u_\Sigma), v_h = (v_\Mcal,v_\Sigma) \in V(\Mcal)$ and $\tau_h = (\tau_\Mcal,\tau_\Sigma) \in W(\Mcal)$, define
\begin{align}
	\s_h(u_h;v_h) &\coloneqq \sum_{K \in \Mcal} \sum_{S \in \Sigma(K)} h_S^{-s} \int_S T_{K,S} u_h \cdot (v_S - \Pi_S^k v_K) \d{s},
	\label{def:stabilization-primal}\\
	\gamma_h(\tau_h) &\coloneqq \sum_{K \in \Mcal} \sum_{S \in \Sigma(K)} h_S^{s/(r-1)} \|\tau_S - (\mathcal{R}_h^* \tau_h)|_K \nu_K\|_{L^{r'}(S)}^{r'}
	\label{def:stabilization-dual}
\end{align}
with the notation, for any $K \in \Mcal$ and $S \in \Sigma(K)$,
\begin{align}\label{def:trace-operator}
	T_{K,S} u_h \coloneqq \Pi_S^k (|u_S - \Pi_S^k u_K|^{r-2} (u_S - \Pi_S^k u_K))
\end{align}
and the abbreviation $\s_h(v_h;v_h) \coloneqq \s_h(v_h)$.
In the quadratic case $r = 2$, $\s_h$ is known under the label \emph{Lehrenfeld-Sch\"oberl stabilization} in the HDG methodology.
While the reconstruction operators and stabilization of this section are defined globally, they can be computed cell-wise and their computation is carried out in parallel.

%\begin{align}
%	\s(u_h;v_h) &\coloneqq \sum_{S \in \Sigma\setminus\Sigma_\mathrm{N}} h_S^{1-p} \int_S \|[u_\Mcal]_S\|_{L^p(S)}^{p-2}[u_\Mcal]_S \cdot [v_\Mcal]_S \d{s}	\label{def:stabilization-primal}\\
%	 + \sum_{S \in \Sigma\setminus\Sigma_\mathrm{D}} h_S^{1-p} \int_S &\|u_S - \Pi_S^k \{u_\Mcal\}_S\|_{L^p(S)}^{p-2}(u_S - \Pi_S^k \{u_\Mcal\}_S) \cdot (v_S - \Pi_S^k \{v_\Mcal\}_S) \d{s},\nonumber
%	\\
%	\gamma(\tau_h) &\coloneqq \sum_{S \in \Sigma\setminus\Sigma_\mathrm{D}} h_S \int_S \|[\tau_\Mcal \nu_S]_S\|_{L^{p'}(S)}^{p'}\label{def:stabilization-dual}\\
%	&+ \sum_{S \in \Sigma\setminus\Sigma_\mathrm{N}} h_S \int_S \|\tau_S - \{\tau_\Mcal \nu_S\}_S\|_{L^{p'}(S)}^{p'} \d{s}\nonumber.
%\end{align}

\section{Discrete weak convex duality}\label{sec:discrete-convex-duality}
This section establishes the discrete weak duality \eqref{ineq:discrete-duality}.
\begin{theorem}[weak duality]\label{thm:weak-duality}
	The discrete energy $E_h$ from \eqref{def:discrete-energy} and the discrete dual energy $E_h^*$ from \eqref{def:discrete-dual-energy} satisfy \eqref{ineq:discrete-duality}.
\end{theorem}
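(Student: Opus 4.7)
The plan is to derive \eqref{ineq:discrete-duality} from three ingredients: the pointwise Fenchel--Young inequality applied to the two convex conjugate pairs, a discrete integration by parts that pairs $\D_h$ with $\PotRec_h^*$ and $\div_h$ with the cell unknown $v_\Mcal$, and H\"older--Young on the resulting face integrals to close onto the stabilizations $\s_h$ and $\gamma_h$.

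First, Fenchel--Young yields pointwise $\Psi(\D_h v_h)+\Psi^*(\PotRec_h^*\tau_h)\geq \D_h v_h:\PotRec_h^*\tau_h$ and $\psi_h(x,\Pi_\Mcal^k v_\Mcal)+\psi_h^*(x,\div_h\tau_h)\geq (\Pi_\Mcal^k v_\Mcal)\cdot\div_h\tau_h$ almost everywhere on $\Omega$. After integration, the projection may be dropped from the second term because $\div_h\tau_h\in P_k(\Mcal;\R^m)$, so the claim $E_h(v_h)\geq E_h^*(\tau_h)$ reduces to the bound
\[
\mathcal{B}(v_h,\tau_h)\coloneqq\int_\Omega \D_h v_h:\PotRec_h^*\tau_h\d{x}+\int_\Omega v_\Mcal\cdot\div_h\tau_h\d{x}\geq -\s_h(v_h)/r-\gamma_h(\tau_h)/r'.
\]

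The core step evaluates $\mathcal{B}$ explicitly. Testing \eqref{def:gradient-reconstruction} with $\tau_k=\PotRec_h^*\tau_h\in P_k(\Mcal;\M)$ and \eqref{def:dual-pot-rec-1} with $v_{k+1}=v_\Mcal\in P_{k+1}(\Mcal;\R^m)$, and adding the two identities, piecewise integration by parts converts the surviving broken volume terms into a cell-wise boundary sum, which combines with the two skeletal contributions from \eqref{def:gradient-reconstruction}--\eqref{def:dual-pot-rec-1}. Inserting the vanishing quantity $\sum_{K\in\Mcal}\sum_{S\in\Sigma(K)}\int_S(\nu_K\cdot\nu_S)\,v_S\cdot\tau_S\d{s}=0$ (which is zero on interior faces by opposite orientations of $\nu_K$ on the two adjacent cells, and on boundary faces by $v_S|_{\Sigma_\mathrm{D}}\equiv 0$ and $\tau_S|_{\Sigma_\mathrm{N}}\equiv 0$) rewrites the expression as the symmetric face pairing
\[
\mathcal{B}(v_h,\tau_h)=\sum_{K\in\Mcal}\sum_{S\in\Sigma(K)}\int_S (v_S-\Pi_S^k v_K)\cdot\bigl((\PotRec_h^*\tau_h)|_K\nu_K-(\nu_K\cdot\nu_S)\tau_S\bigr)\d{s},
\]
with the replacement $v_K\mapsto \Pi_S^k v_K$ being for free because the right factor lies in $P_k(S;\R^m)$.

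From here, H\"older on each face followed by Young's inequality with the weights $h_S^{-s/r}$ (primal) and $h_S^{s/r}$ (dual) matches the two stabilizations: the primal side reproduces $\sum_{K,S}h_S^{-s}\|v_S-\Pi_S^k v_K\|_{L^r(S)}^r=\s_h(v_h)$, after noting that $v_S-\Pi_S^k v_K\in P_k(S;\R^m)$ collapses the outer $\Pi_S^k$ in $T_{K,S}$, while the dual side reproduces $\gamma_h(\tau_h)$ because $sr'/r=s/(r-1)$ and because the $(\nu_K\cdot\nu_S)$ factor implements the orientation convention that aligns $\tau_S$ on $S\in\Sigma(K)$ with $\nu_K$. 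The main obstacle is the Step~2 bookkeeping: keeping signs and orientations straight across interior and boundary faces of a general polytopal mesh, and invoking the $V_\mathrm{D}(\Mcal)$/$W_\mathrm{N}(\Mcal)$ constraints at precisely the right moment so that all surface contributions collapse into one symmetric pairing that is ready for Young's inequality; once this identity is in hand, the remaining closure is mechanical.
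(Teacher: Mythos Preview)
Your proposal is correct and follows essentially the same approach as the paper: Fenchel--Young on both conjugate pairs, a discrete integration by parts that pairs $\D_h$ with $\mathcal{R}_h^*$, and H\"older--Young on the resulting face integrals to recover $\s_h$ and $\gamma_h$. The only difference is organizational---the paper first states the integration-by-parts identity in jump/average form (its Lemma~\ref{lem:discrete-integration-by-parts}) and then converts to the cell--face form via elementary algebra, whereas you arrive at the cell--face identity for $\mathcal{B}(v_h,\tau_h)$ directly by combining \eqref{def:gradient-reconstruction}, \eqref{def:dual-pot-rec-1}, and the vanishing sum $\sum_{K,S}(\nu_K\cdot\nu_S)\int_S v_S\cdot\tau_S\d{s}=0$; this is a mild streamlining but not a different route.
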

The point of departure is the following result.
\begin{lemma}[discrete integration by parts]\label{lem:discrete-integration-by-parts}
	Any $v_h = (v_\Mcal,v_\Sigma) \in V_\mathrm{D}(\Mcal)$ and $\tau_h = (\tau_\Mcal, \tau_\Sigma) \in W_\mathrm{N}(\Mcal)$ satisfy
	\begin{align}
		&\int_\Omega \D_h v_h : \mathcal{R}_h^* \tau_h \d{x} = \sum_{S \in \Sigma\setminus\Sigma_\mathrm{D}} \int_S (v_S - \Pi_S^k\{v_\Mcal\}_S) \cdot [\mathcal{R}_h^* \tau_h \nu_S]_S \d{s}\nonumber\\
		&\qquad+ \sum_{S \in \Sigma\setminus\Sigma_\mathrm{N}} \int_S \Pi_S^k[v_\Mcal]_S \cdot (\tau_S - \{\mathcal{R}_h^* \tau_h \nu_S\}_S) \d{s} - \int_\Omega v_\Mcal \cdot \div_h \tau_h \d{x}.
		\label{eq:discrete-integration-by-parts}
	\end{align}
\end{lemma}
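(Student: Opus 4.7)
The natural strategy is to invoke the two defining identities \eqref{def:gradient-reconstruction} (for $\D_h$) and \eqref{def:dual-pot-rec-1} (for $\mathcal{R}_h^*$) in succession, bridged by a piecewise integration by parts applied to the polynomial $v_\Mcal\in P_{k+1}(\Mcal;\R^m)$ against $\mathcal{R}_h^*\tau_h\in P_k(\Mcal;\M)$. The source-term structure of both definitions is almost symmetric, so everything should drop out once the skeleton integrals are rearranged into jumps and averages.

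First I would test \eqref{def:gradient-reconstruction} with the admissible $\tau_k = \mathcal{R}_h^*\tau_h$ to obtain
\[
\int_\Omega \D_h v_h : \mathcal{R}_h^*\tau_h\d{x} = -\int_\Omega v_\Mcal\cdot\div_\pw \mathcal{R}_h^*\tau_h\d{x} + \sum_{S\in\Sigma}\int_S v_S\cdot[\mathcal{R}_h^*\tau_h\nu_S]_S\d{s}.
\]
Next, since $v_\Mcal$ and $\mathcal{R}_h^*\tau_h$ are piecewise polynomial, cell-wise integration by parts rewrites the volume term as $\int_\Omega \D_\pw v_\Mcal:\mathcal{R}_h^*\tau_h\d{x}$ minus $\sum_{K\in\Mcal}\int_{\partial K}v_\Mcal\cdot\mathcal{R}_h^*\tau_h\nu_K\d{s}$. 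Testing \eqref{def:dual-pot-rec-1} with $v_{k+1}=v_\Mcal\in P_{k+1}(\Mcal;\R^m)$ replaces the remaining volume integral by $-\int_\Omega v_\Mcal\cdot\div_h\tau_h\d{x} + \sum_{S\in\Sigma}\int_S [v_\Mcal]_S\cdot\tau_S\d{s}$. Collecting everything leaves only the divergence-reconstruction volume term, which already has the right sign, plus purely skeleton contributions.

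The remaining work is bookkeeping on the skeleton. On each interior face $S\in\Sigma(\Omega)$ the identity $[v_\Mcal\cdot\mathcal{R}_h^*\tau_h\nu_S]_S = \{v_\Mcal\}_S\cdot[\mathcal{R}_h^*\tau_h\nu_S]_S + [v_\Mcal]_S\cdot\{\mathcal{R}_h^*\tau_h\nu_S\}_S$ regroups the cell contributions, which combine with the existing $v_S\cdot[\mathcal{R}_h^*\tau_h\nu_S]_S$ and $[v_\Mcal]_S\cdot\tau_S$ into $(v_S-\{v_\Mcal\}_S)\cdot[\mathcal{R}_h^*\tau_h\nu_S]_S + [v_\Mcal]_S\cdot(\tau_S - \{\mathcal{R}_h^*\tau_h\nu_S\}_S)$; since $\mathcal{R}_h^*\tau_h\nu_S$ and $\tau_S$ are in $P_k(S;\R^m)$, the $L^2$ projections $\Pi_S^k$ can be introduced onto $\{v_\Mcal\}_S$ and $[v_\Mcal]_S$ without changing the integrals. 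On boundary faces, the conventions $[v]_S=\{v\}_S$ and $[\mathcal{R}_h^*\tau_h\nu_S]_S=\{\mathcal{R}_h^*\tau_h\nu_S\}_S$ plus the boundary conditions $v_S\equiv 0$ on $\Sigma_\mathrm{D}$ and $\tau_S\equiv 0$ on $\Sigma_\mathrm{N}$ reduce the Dirichlet faces to the second sum and the Neumann faces to the first, which is exactly why the two sums in \eqref{eq:discrete-integration-by-parts} run over $\Sigma\setminus\Sigma_\mathrm{D}$ and $\Sigma\setminus\Sigma_\mathrm{N}$.

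The only step demanding care is this last one: the boundary contributions are not of the same jump-average form as the interior ones, so one has to check that after substituting the boundary conditions they nevertheless land, after projecting, in the correct asymmetric index set $\Sigma\setminus\Sigma_\mathrm{D}$ respectively $\Sigma\setminus\Sigma_\mathrm{N}$. Beyond this accounting, the argument is purely algebraic.
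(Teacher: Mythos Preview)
Your proposal is correct and follows essentially the same route as the paper's proof: test \eqref{def:gradient-reconstruction} with $\tau_k=\mathcal{R}_h^*\tau_h$, integrate by parts cell-wise, then test \eqref{def:dual-pot-rec-1} with $v_{k+1}=v_\Mcal$, and finish by rearranging the skeleton terms via the jump--average identity and inserting the projections $\Pi_S^k$ where the opposite factor lies in $P_k(S;\R^m)$. The paper merely orders the steps slightly differently (it states the piecewise integration by parts first, already splitting the boundary faces between the two index sets, and then invokes the definition of $\D_h$), but the mechanism and all ingredients are identical to yours.
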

\begin{proof}
	A piecewise integration by parts shows
	\begin{align}\label{eq:proof-dibp-1}
		&- \int_\Omega v_\Mcal \cdot \div_\pw \mathcal{R}_h^* \tau_h \d{x} = \int_\Omega \D_\pw v_\Mcal : \mathcal{R}_h^* \tau_h \d{x}\nonumber\\
		&\qquad - \sum_{S \in \Sigma\setminus\Sigma_\mathrm{D}} \int_S \{v_\Mcal\}_S \cdot [\mathcal{R}_h^* \tau_h \nu_S]_S \d{s} - \sum_{S \in \Sigma\setminus\Sigma_\mathrm{N}} \int_S [v_\Mcal]_S \cdot \{\mathcal{R}_h^* \tau_h \nu_S\}_S \d{s}.
	\end{align}
	This and the definition \eqref{def:gradient-reconstruction} of the gradient reconstruction $\D_h$ prove
	\begin{align*}
		&\int_\Omega \D_h v_h : \mathcal{R}_h^* \tau_h \d{x} = \int_\Omega \D_\pw v_\Mcal : \mathcal{R}_h^* \tau_h \d{x}\\
		&+ \sum_{S \in \Sigma\setminus\Sigma_\mathrm{D}} \int_S (v_S - \Pi_S^k\{v_\Mcal\}_S) \cdot [\mathcal{R}_h^* \tau_h \nu_S]_S \d{s}
		- \sum_{S \in \Sigma\setminus\Sigma_\mathrm{N}} \int_S [v_\Mcal]_S \cdot \{\mathcal{R}_h^* \tau_h \nu_S\}_S \d{s}.
	\end{align*}
	Since $\tau_S - \{\mathcal{R}_h^* \tau_h \nu_S\}_S \in P_k(S;\R^m)$ along $S \in \Sigma$, the combination of this with the definition \eqref{def:dual-pot-rec-1} of $\mathcal{R}^*_h$ concludes the proof.
\end{proof}
\begin{proof}[Proof of \Cref{thm:weak-duality}]
	Let any $v_h = (v_\Mcal,v_\Sigma) \in V_\mathrm{D}(\Mcal)$ and $\tau_h = (\tau_\Mcal,\tau_\Sigma) \in W_\mathrm{N}(\Mcal)$ be given.
	Since $\Psi$ is convex, the Fenchel-Young inequality \cite[p.~105]{Rockafellar1970} proves $\D_h v_h : \mathcal{R}_h^* \tau_h \leq \Psi(\D_h v_h) + \Psi^*(\mathcal{R}_h^* \tau_h)$ pointwise a.e.~in $\Omega$.
	An integration of this over the domain $\Omega$ provides
	\begin{align}\label{ineq:proof-discrete-duality-integration}
		0\leq \int_\Omega (\Psi(\D_h v_h) + \Psi^*(\mathcal{R}_h^* \tau_h) - \D_h v_h : \mathcal{R}_h^* \tau_h) \d{x}.
	\end{align}
	Elementary algebra shows the identities
	\begin{align*}
		(v_S &- \Pi_S^k\{v_\Mcal\}_S) \cdot [\mathcal{R}_h^* \tau_h \nu_S]_S + \Pi_S^k[v_\Mcal]_S \cdot (\tau_S - \{\mathcal{R}_h^* \tau_h \nu_S\}_S)\\
		& = (v_S - \Pi_S^k v_{K_-}) \cdot (\tau_S - (\mathcal{R}_h^* \tau_h)|_{K_-} \nu_S) - (v_S - \Pi_S^k v_{K_+}) \cdot (\tau_S - (\mathcal{R}_h^* \tau_h)|_{K_+} \nu_S)
	\end{align*}
	on any interior face $S \in \Sigma(\Omega)$,
	\begin{align*}
		\Pi_S^k[v_\Mcal]_S \cdot (\tau_S - \{\tau_\Mcal \nu_S\}_S) = - (v_S - \Pi_S^k v_\Mcal) \cdot (\tau_S - \mathcal{R}_h^* \tau_h \nu_S)
	\end{align*}
	on any Dirichlet face $S \in \Sigma_\mathrm{D}$, and
	\begin{align*}
		(v_S - \Pi_S^k\{v_\Mcal\}_S) \cdot [\mathcal{R}_h^* \tau_h \nu_S]_S = -(v_S - \Pi_S^k v_\Mcal) \cdot (\tau_S - \mathcal{R}_h^* \tau_h \nu_S)
	\end{align*}
	on any Neumann face $S \in \Sigma_\mathrm{N}$.
	Thus, the first two integrals on the right-hand side of \eqref{eq:discrete-integration-by-parts} are equal to
	\begin{align*}
		- \sum_{K \in \mathcal{M}} \sum_{S \in \Sigma(K)} (\nu_S \cdot \nu_K) \int_S (v_S - \Pi_S^k v_K) \cdot (\tau_S - (\mathcal{R}_h^* \tau_h)|_K \nu_S) \d{s}.
	\end{align*}
	This is bounded by $\s_h(v_h)/r + \gamma_h(\tau_h)/r'$ due to
	the H\"older, Cauchy, and Young inequalities.
	Hence, \eqref{eq:discrete-integration-by-parts} and $\div_h \tau_h \in P_k(\Mcal;\R^m)$ provide
	\begin{align*}
		- \int_\Omega \D_h v_h : \mathcal{R}_h^* \tau_h \d{x} \leq \int_\Omega \Pi_\Mcal^k v_\Mcal \cdot \div_h \tau_h \d{x} + \s_h(v_h)/r + \gamma_h(v_h)/r'.
	\end{align*}
	The combination of this with \eqref{ineq:proof-discrete-duality-integration}
	results in
	\begin{align*}
		0\leq \int_\Omega (\Psi(\D_h v_h) + \Psi^*(\mathcal{R}_h^* \tau_h) + \Pi_\Mcal^k v_\Mcal \cdot \div_h \tau_h) \d{x} + \frac{1}{r}\s_h(v_h) + \frac{1}{r'}\gamma_h(\tau_h).
	\end{align*}
	This and $\Pi_\Mcal^k v_\Mcal \cdot \div_h \tau_h \leq \psi_h(x,\Pi_\Mcal^k v_\Mcal) + \psi_h^*(x,\div_h \tau_h)$ for a.e.~$x \in \Omega$ from the Fenchel-Young inequality \cite[p.~105]{Rockafellar1970} conclude the proof.
\end{proof}

The general idea of this paper can be applied to different choices of stabilizations, e.g., the classical HHO stabilization from \cite{DiPietroErnLemaire2014,DiPietroErn2015,DiPietroDroniou2017,DiPietroDroniou2020}.

\begin{example}[classical stabilization]\label{ex:classical-HHO-stab}
	The HHO method from \cite{DiPietroDroniou2017} utilizes the ansatz space $V(\Mcal) = P_k(\Mcal;\R^m) \times P_k(\Sigma;\R^m)$ and a stabilization of the form
	\begin{align}\label{def:classical-HHO-stab}
		\widetilde{\s}_h(v_h) \coloneqq \sum_{K \in \Mcal} \sum_{S \in \Sigma(K)} h_S^{-s} \|\Pi_S^k(v_S - (\mathcal{P}_h v_h)|_K)\|_{L^r(S)}^r
	\end{align}
	for any $v_h = (v_\Mcal, v_\Sigma) \in V(\Mcal)$
	with $\mathcal{P}_h v_h \coloneqq v_\Mcal + (1 - \Pi_\Mcal^k) \mathcal{R}_h v_h \in P_{k+1}(\Mcal;\R^m)$ and the potential reconstruction $\mathcal{R}_h$ from \eqref{def:pot-rec-1}--\eqref{def:pot-rec-2}.
	Observe that $v_\Mcal - \mathcal{P}_h v_h \perp P_k(\Mcal; \R^m)$ and so, it is possible to replace $v_\Mcal$ by $\mathcal{P}_h v_h$ throughout the proof of \Cref{lem:discrete-integration-by-parts}.
	This leads to the discrete integration by parts formula
	\begin{align*}
		&\int_\Omega \D_h v_h : \mathcal{R}_h^* \tau_h \d{x} = \sum_{S \in \Sigma\setminus\Sigma_\mathrm{D}} \int_S (v_S - \Pi_S^k\{\mathcal{P}_h v_h\}_S) \cdot [\mathcal{R}_h^* \tau_h \nu_S]_S \d{s}\nonumber\\
		&\qquad+ \sum_{S \in \Sigma\setminus\Sigma_\mathrm{N}} \int_S \Pi_S^k[\mathcal{P}_h v_h]_S \cdot (\tau_S - \{\mathcal{R}_h^* \tau_h \nu_S\}_S) \d{s} - \int_\Omega v_\Mcal \cdot \div_h \tau_h \d{x}.
	\end{align*}
	From this, we deduce that \eqref{ineq:discrete-duality} holds for $E_h$ from \eqref{def:discrete-energy} with $\s_h$ replaced by $\widetilde{\s}_h$ and $E_h^*$ from \eqref{ineq:discrete-duality} (without any modification).
\end{example}

\section{A priori}\label{sec:a-priori}
This section establishes error estimates using the weak duality \eqref{ineq:discrete-duality}.
For simplicity, we assume the explicit representation of the lower-order term
\begin{align}\label{def:lower-order-terms}
	\psi(x,a) \coloneqq -f(x) \cdot a \quad\text{and}\quad \psi_h(x,a) \coloneqq -f_h(x)\cdot a
\end{align}
with a volume force $f \in L^{p'}(\Omega;\R^m)$ and $f_h \coloneqq \Pi_\Mcal^k f \in P_k(\Mcal;\R^m)$.
Then
\begin{align*}
	E^*(\tau) &= -\int_\Omega \Psi^*(\tau) \d{x} - \chi_{-f} (\div \tau) &&\text{for any } \tau \in W_\mathrm{N},\\
	E_h^*(\tau_h) &= -\int_\Omega \Psi^*(\mathcal{R}^*_h \tau_h) \d{x} - \chi_{-f_h}(\div_h \tau_h) &&\text{for any } \tau_h \in W_\mathrm{N}(\Mcal).
\end{align*}
Let $u \in \arg\min E(V_\mathrm{D})$ (resp.~$\sigma \in \arg\max E^*(W_\mathrm{N})$) be a minimizer (resp.~maximizer) of $E$ (resp.~$E^*$) in $V_\mathrm{D}$ (resp.~$W_\mathrm{N}$).
The error analysis of this paper applies to convex minimization problems without a duality gap
\begin{align}\label{def:strong-duality}
	\max E^*(W_\mathrm{N}) = E^*(\sigma) = E(u) = \min E(V_\mathrm{D}) < \infty.
\end{align}
Recall the subdifferential of a convex function from \eqref{def:subdifferential}.
\begin{theorem}[a~priori]\label{thm:error-estimate}
	Suppose that \eqref{def:lower-order-terms}--\eqref{def:strong-duality} hold. If $\sigma \in W^{1,1}(\Omega; \M)$, then any $\xi \in L^{p'}(\Omega; \M)$ with
	$\xi \in \partial \Psi(\D_h \I_V u)$ and $\varrho \in L^p(\Omega; \M)$ with $\varrho \in \partial \Psi^*(\mathcal{R}_h^* \I_W \sigma)$ a.e.~in $\Omega$ satisfy
	\begin{align}\label{ineq:error-estimate}
		E_h(\I_V u) &- \min E_h(V_\mathrm{D}(\Mcal)) \leq -\int_\Omega (1 - \Pi_\Mcal^k) \xi:(1 - \Pi_\Mcal^k) \D u \d{x}\nonumber\\
		& + \int_\Omega (\D_h \I_V u - \varrho) : (\sigma - \mathcal{R}_h^* \I_W \sigma) \d{x} + \int_\Omega (f - f_h) \cdot (u - \mathcal{R}_h \I_V u) \d{x}\nonumber\\
		& + \s_h(\I_V u)/r + \gamma_h(\I_W \sigma)/r' - \sum_{S \in \Sigma \setminus \Sigma_\mathrm{N}} \int_S [\mathcal{R}_h \I_V u]_S \cdot (1 - \Pi_S^k)\sigma \nu_S \d{x}.
	\end{align}
\end{theorem}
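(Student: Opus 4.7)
The plan is to combine the discrete weak duality of \Cref{thm:weak-duality} with Fenchel subgradient inequalities, strong duality, and a careful analysis of a cross term. Since $\sigma$ maximises $E^*$, the Euler--Lagrange relations yield $-\div\sigma = f$ and $\sigma\nu|_{\Gamma_\mathrm{N}} = 0$, whence \Cref{lem:commuting-property} provides $\div_h\I_W\sigma = \Pi_\Mcal^k\div\sigma = -f_h$ and $(\I_W\sigma)_S = \Pi_S^k\sigma\nu_S = 0$ on $\Sigma_\mathrm{N}$. Hence $\I_W\sigma \in W_\mathrm{N}(\Mcal)$ with $\chi_{-f_h}(\div_h\I_W\sigma) = 0$, and \Cref{thm:weak-duality} yields
$E_h(\I_V u) - \min E_h(V_\mathrm{D}(\Mcal)) \leq E_h(\I_V u) - E_h^*(\I_W\sigma)$, whose right-hand side (using $\Pi_\Mcal^k(\I_V u)_\Mcal = \Pi_\Mcal^k u$) equals
\[
\int_\Omega(\Psi(\D_h\I_V u) + \Psi^*(\mathcal{R}_h^*\I_W\sigma))\d{x} - \int_\Omega f_h\cdot\Pi_\Mcal^k u\d{x} + \s_h(\I_V u)/r + \gamma_h(\I_W\sigma)/r'.
\]

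The Fenchel subgradient inequalities for $\xi \in \partial\Psi(\D_h\I_V u)$ at $\D u$ and for $\varrho \in \partial\Psi^*(\mathcal{R}_h^*\I_W\sigma)$ at $\sigma$, combined with the pointwise identity $\Psi(\D u) + \Psi^*(\sigma) = \sigma:\D u$ a.e.~(forced by $E(u) = E^*(\sigma)$ via the Fenchel--Young inequality) and the continuous integration by parts $\int_\Omega\sigma:\D u\d{x} = \int_\Omega f\cdot u\d{x}$, bound $\int_\Omega(\Psi(\D_h\I_V u) + \Psi^*(\mathcal{R}_h^*\I_W\sigma))\d{x}$ by $\int_\Omega(f\cdot u + \xi:(\D_h\I_V u - \D u) + \varrho:(\mathcal{R}_h^*\I_W\sigma - \sigma))\d{x}$. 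Since $f_h \in P_k(\Mcal;\R^m)$ reduces $\int_\Omega(f\cdot u - f_h\cdot\Pi_\Mcal^k u)\d{x}$ to $\int_\Omega(f - f_h)\cdot u\d{x}$, this gives
\[
E_h(\I_V u) - E_h^*(\I_W\sigma) \leq \int_\Omega\xi:(\D_h\I_V u - \D u)\d{x} + \int_\Omega\varrho:(\mathcal{R}_h^*\I_W\sigma - \sigma)\d{x} + \int_\Omega(f - f_h)\cdot u\d{x} + \s_h(\I_V u)/r + \gamma_h(\I_W\sigma)/r'.
\]
The commuting property $\D_h\I_V u = \Pi_\Mcal^k\D u$ and the orthogonality $(1-\Pi_\Mcal^k)\D u \perp P_k(\Mcal;\M)$ rewrite $\int_\Omega\xi:(\D_h\I_V u - \D u)\d{x}$ as $-\int_\Omega(1-\Pi_\Mcal^k)\xi:(1-\Pi_\Mcal^k)\D u\d{x}$, matching the first summand of \eqref{ineq:error-estimate}. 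Inserting $\D_h\I_V u$ into the $\varrho$-integral splits it into the targeted $\int_\Omega(\D_h\I_V u - \varrho):(\sigma - \mathcal{R}_h^*\I_W\sigma)\d{x}$ plus a correction $\int_\Omega\D_h\I_V u:(\mathcal{R}_h^*\I_W\sigma - \sigma)\d{x}$.

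The main obstacle is to combine this correction with $\int_\Omega(f - f_h)\cdot u\d{x}$ to produce the remaining two summands of \eqref{ineq:error-estimate}. My strategy is a two-step replacement based on the orthogonal decomposition \eqref{def:orthogonal-decomposition}. First, \Cref{lem:commuting-property} and \eqref{def:orthogonal-decomposition} yield $\D_h\I_V u - \D_\pw\mathcal{R}_h\I_V u \in Z(\Mcal)$, whereas the projection property \eqref{def:dual-pot-rec-2} gives $\Pi_\Mcal^k\sigma - \mathcal{R}_h^*\I_W\sigma \in \D_\pw P_{k+1}(\Mcal;\R^m)$; the $L^2$-orthogonality of these two subspaces produces $\int_\Omega(\sigma - \mathcal{R}_h^*\I_W\sigma):\D_h\I_V u\d{x} = \int_\Omega(\sigma - \mathcal{R}_h^*\I_W\sigma):\D_\pw\mathcal{R}_h\I_V u\d{x}$. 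Second, piecewise integration by parts (valid thanks to $\sigma \in W^{1,1}(\Omega;\M)$, $-\div\sigma = f$, and $\sigma\nu|_{\Gamma_\mathrm{N}} = 0$) evaluates $\int_\Omega\sigma:\D_\pw\mathcal{R}_h\I_V u\d{x} = \int_\Omega f\cdot\mathcal{R}_h\I_V u\d{x} + \sum_{S\in\Sigma\setminus\Sigma_\mathrm{N}}\int_S[\mathcal{R}_h\I_V u]_S\cdot\sigma\nu_S\d{s}$, whereas the defining identity \eqref{def:dual-pot-rec-1} tested at $v_{k+1} = \mathcal{R}_h\I_V u \in P_{k+1}(\Mcal;\R^m)$ evaluates $\int_\Omega\mathcal{R}_h^*\I_W\sigma:\D_\pw\mathcal{R}_h\I_V u\d{x} = \int_\Omega f_h\cdot\mathcal{R}_h\I_V u\d{x} + \sum_{S\in\Sigma\setminus\Sigma_\mathrm{N}}\int_S[\mathcal{R}_h\I_V u]_S\cdot\Pi_S^k\sigma\nu_S\d{s}$. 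Subtracting these identities and adding $\int_\Omega(f - f_h)\cdot u\d{x}$ produces exactly $\int_\Omega(f - f_h)\cdot(u - \mathcal{R}_h\I_V u)\d{x} - \sum_{S\in\Sigma\setminus\Sigma_\mathrm{N}}\int_S[\mathcal{R}_h\I_V u]_S\cdot(1 - \Pi_S^k)\sigma\nu_S\d{s}$, concluding the proof.
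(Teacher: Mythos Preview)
Your proof is correct and follows essentially the same route as the paper: the discrete weak duality $\min E_h \geq E_h^*(\I_W\sigma)$ combined with the subgradient inequalities for $\xi$ and $\varrho$, the commuting property $\D_h\I_V u=\Pi_\Mcal^k\D u$, and the identity \eqref{eq:proof-error-estimate-ibp-1}--\eqref{eq:proof-error-estimate-ibp-2} obtained from the $Z(\Mcal)$-orthogonality \eqref{def:dual-pot-rec-2} and the defining relation \eqref{def:dual-pot-rec-1} of $\mathcal{R}_h^*$. The only cosmetic difference is that the paper first splits $E_h(\I_V u)-E_h^*(\I_W\sigma)$ as $(E_h(\I_V u)-E(u))+(E^*(\sigma)-E_h^*(\I_W\sigma))$ via strong duality and bounds the two pieces separately, whereas you invoke the equivalent pointwise Fenchel--Young equality $\Psi(\D u)+\Psi^*(\sigma)=\sigma:\D u$ and proceed in one pass.
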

\begin{proof}
	The proof departs from the split
	\begin{align}\label{ineq:proof-error-estimate-split}
		E_h(\I_V u) - E_h(u_h) \leq E_h(\I_V u) - E(u) + E^*(\sigma) - E_h^*(\I_W \sigma).
	\end{align}
	Since $0 \leq \Psi(\D u) - \Psi(\D_h \I_V u) - \xi : (\D u - \D_h \I_V u)$ a.e.~in $\Omega$ from $\xi \in \partial \Psi(\D_h \I_V u)$, the projection property $\D_h \I_V u = \Pi_\Mcal^k \D u$ from \Cref{lem:commuting-property} imply
	\begin{align}\label{ineq:proof-convergence-rates-primal}
		E_h(\I_V u) - E(u) \leq - \int_\Omega (\xi : (1 - \Pi_\Mcal^k) \D u - (f - f_h) \cdot u) \d{x} + \s_h(\I_V u)/r&.
	\end{align}
	The property $\varrho \in \partial \Psi^*(\mathcal{R}_h^* \I_W \sigma)$ provides $0 \leq \Psi^*(\sigma) - \Psi^*(\mathcal{R}^*_h \I_W \sigma) - \varrho : (\sigma - \mathcal{R}^*_h \I_W \sigma)$ a.e.~in $\Omega$. Since $\div_h \I_W \sigma = \Pi_\Mcal^k \div \sigma = -f_h$ from \Cref{lem:commuting-property}, this shows
	\begin{align}\label{ineq:proof-convergence-rates-dual}
		E^*(\sigma) - E^*_h(\I_W \sigma) \leq - \int_\Omega \varrho : (\sigma - \mathcal{R}_h^* \I_W \sigma) \d{x} + \gamma_h(\I_W \sigma)/r'.
	\end{align}
	The $L^2$ orthogonality
	$\D_h \I_V u - \D_\pw \mathcal{R}_h \I_V u \perp \D_\pw P_{k+1}(\Mcal;\M)$ from \Cref{lem:commuting-property}, $\D_h \I_V u \in P_k(\Mcal;\M)$, and $\Pi_{Z(\Mcal)}(\Pi_\Mcal^k \sigma - \mathcal{R}_h^* \I_W \sigma) = 0$ from \eqref{def:dual-pot-rec-2} lead to
	\begin{align}\label{eq:proof-error-estimate-ibp-1}
		\int_\Omega (\sigma - \mathcal{R}_h^* \I_W \sigma) : \D_h \I_V u \d{x} = \int_\Omega (\sigma - \mathcal{R}_h^* \I_W \sigma) : \D_\pw \mathcal{R}_h \I_V u \d{x}.
	\end{align}
	The definition \eqref{def:dual-pot-rec-1} of $\mathcal{R}_h^*$, a piecewise integration by parts, $\div \sigma = - f$, and $\div_h \I_W \sigma = - f_h$ prove
	\begin{align}\label{eq:proof-error-estimate-ibp-2}
		\int_\Omega (\sigma &- \mathcal{R}_h^* \I_W \sigma) : \D_h\I_V u \d{x}\nonumber\\
		&\qquad = \int_\Omega (f - f_h) \cdot \mathcal{R}_h \I_V u \d{x}
		+ \sum_{S \in \Sigma \setminus \Sigma_\mathrm{N}} \int_S [\mathcal{R}_h \I_V u]_S \cdot (\sigma - \Pi_S^k \sigma) \nu_S \d{x}.
	\end{align}
	The combination of this with \eqref{ineq:proof-error-estimate-split}--\eqref{ineq:proof-convergence-rates-dual} concludes the proof.
\end{proof}
\begin{remark}[lowest-order case]
	If $k = 0$, then it is possible to choose 
	$\xi \in P_0(\Mcal;\M)$ (because $\D_h \I_V u \in P_0(\Mcal;\M)$).
	In this case, the $L^2$ orthogonality $u - \mathcal{R}_h \I_V u \perp P_0(\Mcal;\R^m)$ from \eqref{def:pot-rec-2} imply the simplified version
	\begin{align*}
		E_h(\I_V u) &- \min E_h(V_\mathrm{D}(\Mcal)) \leq \int_\Omega (\D_h \I_V u - \varrho) : (\sigma - \mathcal{R}_h^* \I_W \sigma) \d{x}\nonumber\\
		& + \s_h(\I_V u)/r + \gamma_h(\I_W \sigma)/r' - \sum_{S \in \Sigma \setminus \Sigma_\mathrm{N}} \int_S [\mathcal{R}_h \I_V u]_S \cdot (1 - \Pi_S^k)\sigma \nu_S \d{x}.
	\end{align*}
\end{remark}
\Cref{thm:error-estimate} applies to minimization problems \eqref{def:energy} with energy densities $\Psi$ that satisfy a two-sided growth condition, i.e., there exist positive constants $0 < \newcnstS\label{cnst:growth-lhs-1} \leq \newcnstS\label{cnst:growth-rhs-1}$ and nonnegative constants $\newcnstS\label{cnst:growth-lhs-2}$, $\newcnstS\label{cnst:growth-rhs-2} \geq 0$ such that
\begin{align}\label{ineq:2-sided-growth}
	\cnstS{cnst:growth-lhs-1}|A|^p - \cnstS{cnst:growth-lhs-2} \leq \Psi(A) \leq \cnstS{cnst:growth-rhs-1}|A|^p + \cnstS{cnst:growth-rhs-2} \quad\text{for any } A \in \M.
\end{align}
An immediate consequence of \eqref{ineq:2-sided-growth} is the a~priori bound $\|\D u\|_p + \|\sigma\|_{p'} \lesssim 1$ on the continuous level.
(For instance, $\|\D u\|_p \leq \newcnstS\label{cnst:Du}$ with the positive root $\cnstS{cnst:Du}$ of the scalar-valued function $t \mapsto \cnstS{cnst:growth-lhs-1} t^p - c_\mathrm{P} t \|f\|_{p'} - \cnstS{cnst:growth-lhs-2}|\Omega|$ \cite{CPlechac1997}.)
Lemma 4.2 from \cite{CarstensenTran2021} implies that
a selection of $\xi \in L^{p'}(\Omega;\mathbb{M})$ and $\varrho \in L^p(\Omega;\mathbb{M})$ in \Cref{thm:error-estimate} with $\xi \in \partial \Psi(\mathrm{D}_h \I_V u)$ and $\varrho \in \partial \Psi^*(\mathcal{R}_h^*\I_W \sigma)$ a.e.~in $\Omega$ is \emph{always} possible.
The two-sided growth \eqref{ineq:2-sided-growth} provides the bound
\begin{align}\label{ineq:growth-DW}
	\|\xi\|_{p'}^{p'} \lesssim 1 + \|\D_h \I_V u\|_p^p \quad\text{and}\quad \|\varrho\|_p^p \lesssim 1 + \|\mathcal{R}_h \I_W \sigma\|_{p'}^{p'},
\end{align}
cf.~\cite[Lemma 2.1(c)]{CarstensenTran2021} for explicit constants hidden in the notation $\lesssim$.
%An immediate consequence of \eqref{ineq:2-sided-growth} is the a~priori bound $\|\D u\|_p + \|\sigma\|_{p'} \lesssim 1$ on the continuous level and $\|\D_h u_h\|_p + \s(u_h) \lesssim 1$ on the discrete level.
%(For instance, $\|\D u\|_p \leq \newcnstS\label{cnst:Du}$ with the positive root $\cnstS{cnst:Du}$ of the scalar function $t \mapsto \cnstS{cnst:growth-lhs-1} t^p - c_\mathrm{P} t\|f\|_{p'} - \cnstS{cnst:growth-lhs-2}|\Omega|$ \cite{CPlechac1997}.)
%This implies $\|\D_h \I_V u\|_{p} + \|\mathcal{R}_h^* \I_W \sigma\|_{p'} \lesssim 1$ by \Cref{lem:commuting-property} and \eqref{ineq:proof-convergence-rates-1}.
%Lemmas 2.1(d) and 4.2 from \cite{CarstensenTran2021} imply that
%a selection of $\xi$ and $\varrho$ in \Cref{thm:error-estimate} is \emph{always} possible so that $\|\xi\|_{p'} + \|\varrho\|_{p} \lesssim 1$.

\begin{corollary}[convergence rates]\label{cor:convergence-rates}
	Suppose that \eqref{def:lower-order-terms}--\eqref{def:strong-duality} hold and $\Psi$ satisfy \eqref{ineq:2-sided-growth}.
	Let $r = p$ and $p-1-(k+1) \leq s \leq (k+2)(p-1)$.
	Under the smoothness assumptions $u \in V_\mathrm{D} \cap W^{k+2,p}(\Mcal;\R^m)$ and $\sigma \in W^{1,1}(\Omega;\M) \cap W^{k+1,p'}(\Mcal;\M)$, it holds $E_h(\I_V u) - E_h(u_h) \lesssim h_{\max}^{k+1}$.
\end{corollary}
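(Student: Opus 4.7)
The plan is to bound each of the five contributions on the right-hand side of \eqref{ineq:error-estimate} separately by $O(h_{\max}^{k+1})$. As preparation, the a priori bounds $\|\D u\|_p + \|\sigma\|_{p'} \lesssim 1$ recalled after the theorem, the stability $\|\D_h \I_V u\|_p = \|\Pi_\Mcal^k \D u\|_p \leq \|\D u\|_p$ from \Cref{lem:commuting-property}, and the growth property \eqref{ineq:growth-DW} jointly yield uniform bounds $\|\xi\|_{p'} + \|\varrho\|_p \lesssim 1$, once an $L^{p'}$-approximation estimate for $\mathcal{R}_h^*\I_W \sigma$ (the main obstacle below) is in place.

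The three terms not involving the dual potential reconstruction $\mathcal{R}_h^*$ are handled by standard HHO approximation. H\"older's inequality bounds the first term by $\|(1-\Pi_\Mcal^k)\xi\|_{p'}\|(1-\Pi_\Mcal^k)\D u\|_p$, whose second factor is $\lesssim h_{\max}^{k+1}|u|_{W^{k+2,p}(\Mcal)}$ under the smoothness assumption on $u$. For the third term, the orthogonality $f - f_h = (1-\Pi_\Mcal^k) f \perp P_k(\Mcal;\R^m)$ allows the substitution $\int_\Omega(f-f_h)\cdot(u - \mathcal{R}_h \I_V u) = \int_\Omega (f-f_h) \cdot (1-\Pi_\Mcal^k)(u - \mathcal{R}_h \I_V u)$, from which the HHO $L^p$-error $\|u - \mathcal{R}_h \I_V u\|_p \lesssim h_{\max}^{k+2}|u|_{W^{k+2,p}(\Mcal)}$ produces $O(h_{\max}^{k+2})$. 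For the fifth (face) term, the continuity $[u]_S = 0$ gives $[\mathcal{R}_h \I_V u]_S = [\mathcal{R}_h \I_V u - u]_S$, and a discrete trace inequality combined with H\"older over all faces then delivers the better rate $O(h_{\max}^{2(k+1)})$.

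The stabilization terms are computed explicitly. The projection property of $\Pi_S^k$ reduces \eqref{def:stabilization-primal}--\eqref{def:trace-operator} to $\s_h(\I_V u) = \sum_{K,S} h_S^{-s}\|\Pi_S^k(1-\Pi_K^{k+1})u\|_{L^r(S)}^r$; a trace inequality with approximation of $\Pi_K^{k+1}$ yields $\s_h(\I_V u) \lesssim h_{\max}^{r(k+2)-1-s}|u|_{W^{k+2,r}(\Mcal)}^r$, which is $O(h_{\max}^{k+1})$ precisely under $s \leq (k+2)(r-1) = (k+2)(p-1)$. An analogous computation for $\gamma_h(\I_W \sigma)$, using a trace inequality on $\sigma - \Pi_\Mcal^k\sigma$ combined with a discrete inverse trace on the polynomial piece $\Pi_\Mcal^k \sigma - \mathcal{R}_h^* \I_W \sigma$, produces $\gamma_h(\I_W \sigma) \lesssim h_{\max}^{s/(r-1) + r'(k+1) - 1}|\sigma|_{W^{k+1,r'}(\Mcal)}^{r'}$, which is $O(h_{\max}^{k+1})$ precisely under $s \geq p - 1 - (k+1)$. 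The two constraints on $s$ in the hypothesis are exactly these sharp thresholds.

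The main obstacle is the approximation estimate
\[
	\|\sigma - \mathcal{R}_h^* \I_W \sigma\|_{p'} \lesssim h_{\max}^{k+1} |\sigma|_{W^{k+1,p'}(\Mcal)},
\]
after which H\"older's inequality with the uniform bound $\|\D_h \I_V u - \varrho\|_p \lesssim 1$ controls the second term by $O(h_{\max}^{k+1})$. By the projection property \eqref{def:dual-pot-rec-2}, $\Pi_\Mcal^k \sigma - \mathcal{R}_h^* \I_W \sigma$ lies in $\D_\pw P_{k+1}(\Mcal;\R^m)$, so the triangle inequality together with the standard estimate $\|\sigma - \Pi_\Mcal^k \sigma\|_{p'} \lesssim h_{\max}^{k+1}|\sigma|_{W^{k+1,p'}(\Mcal)}$ reduces the task to bounding this gradient-type remainder. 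Combining the defining relation \eqref{def:dual-pot-rec-1} with a piecewise integration by parts on $\sigma$ and the commuting property $\div_h \I_W \sigma = \Pi_\Mcal^k \div \sigma$ (\Cref{lem:commuting-property}) yields, for every $v_{k+1} \in P_{k+1}(\Mcal;\R^m)/P_0(\Mcal;\R^m)$,
\[
	\int_\Omega (\Pi_\Mcal^k \sigma - \mathcal{R}_h^* \I_W \sigma) : \D_\pw v_{k+1} \d{x} = \int_\Omega v_{k+1} \cdot (f - f_h) \d{x} + \sum_{S \in \Sigma \setminus \Sigma_\mathrm{N}} \int_S [v_{k+1}]_S \cdot (1 - \Pi_S^k) \sigma \nu_S \d{s}.
\]
A duality argument in $L^{p'}$ using a local Poincar\'e inequality on the quotient space, together with $\|(1-\Pi_\Mcal^k) f\|_{L^{p'}(K)} \lesssim h_K^k |f|_{W^{k,p'}(K)}$ (available because $f = -\div\sigma$ with $\sigma \in W^{k+1,p'}(\Mcal;\M)$) and $\|(1-\Pi_S^k)\sigma \nu_S\|_{L^{p'}(S)} \lesssim h_K^{k+1-1/p'} |\sigma|_{W^{k+1,p'}(K)}$, then delivers the desired bound. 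Summing the five bounds concludes the proof.
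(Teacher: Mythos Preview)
Your proposal is correct and follows essentially the same route as the paper: both bound the six terms on the right-hand side of \eqref{ineq:error-estimate} individually, with the approximation estimate $\|\sigma - \mathcal{R}_h^*\I_W\sigma\|_{p'} \lesssim h_{\max}^{k+1}$ as the core step, derived from the projection property \eqref{def:dual-pot-rec-2}, the defining relation \eqref{def:dual-pot-rec-1} combined with a piecewise integration by parts, and a duality argument. One minor ordering point: your stated bound $\gamma_h(\I_W\sigma) \lesssim h_{\max}^{s/(r-1)+r'(k+1)-1}$ already presupposes the estimate $\|\Pi_\Mcal^k\sigma - \mathcal{R}_h^*\I_W\sigma\|_{p'} \lesssim h_{\max}^{k+1}$ (the discrete inverse trace introduces a factor $h^{-1}$), so the ``main obstacle'' should logically precede it---the paper handles this by proving the $\sigma_k$ bound first.
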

\begin{proof}
	The proof of $\|\sigma - \mathcal{R}_h^* \I_W \sigma\|_{p'} + \gamma(\I_W \sigma) \lesssim h_{\max}^{k+1}$ can follow the arguments of \cite[Section 6]{DiPietroDroniou2023}. For the convenience of the reader, an alternative proof is provided below. Define $\sigma_k \coloneqq \Pi_\Mcal^k \sigma - \mathcal{R}^*_h \I_W \sigma \in P_k(\Mcal;\M)$.
	Given $\Phi_k \in P_k(\Mcal; \M)$, let $\D_\pw v_{k+1}$ for some $v_{k+1} \in P_{k+1}(\Mcal;\R^m)$ denote the $L^2$ projection of $\Phi_k$ onto $\D_\pw P_{k+1}(\Mcal;\R^m)$.
	Since $\Pi_{Z(\Mcal)} \sigma_k = 0$ by the definition \eqref{def:dual-pot-rec-2} of $\mathcal{R}_h^*$, it holds
	$\int_\Omega \sigma_k : \Phi_k \d{x} = \int_\Omega \sigma_k : \D_\pw v_{k+1} \d{x}$.
	By replacing $\D_h \I_V u$ by $\Phi_{k}$ in \eqref{eq:proof-error-estimate-ibp-1}--\eqref{eq:proof-error-estimate-ibp-2},
	we deduce from the H\"older and the Poincar\'e inequalities that
	\begin{align*}
		\int_\Omega \sigma_k : \Phi_k \d{x} &= - \int_\Omega (1 - \Pi_\Mcal^k) \div \sigma \cdot v_{k+1} \d{x}
		+ \sum_{S \in \Sigma \setminus \Sigma_\mathrm{N}} \int_S [v_{k+1}]_S \cdot (1 - \Pi_S^k) \sigma \nu_S \d{s}\nonumber\\
		&\lesssim \osc(\div \sigma,\Mcal)\|\D_\pw v_{k+1}\|_{p} + \sum_{S \in \Sigma \setminus \Sigma_\mathrm{N}} \int_S [v_{k+1}]_S \cdot (1 - \Pi_S^k) \sigma \nu_S \d{s}.
	\end{align*}
	For any $S \in \Sigma \setminus \Sigma_\mathrm{N}$ and $K \in \Mcal$ with $S \in \Sigma(K)$, \eqref{ineq:quasi-opt-L2}, a discrete trace, and a Poincar\'e inequality reveal $\|(1 - \Pi_S^k) \sigma\|_{L^{p'}(S)} \lesssim \|\sigma - (\Pi_K^k \sigma)|_S\|_{L^{p'}(S)} \lesssim h_K^{1/p}\|\D(1 - \Pi_K^k) \sigma\|_{L^{p'}(K)}$.
	This and the same argument applied to $(1 - \Pi_S^k)[v_{k+1}]_S$ lead to
	\begin{align*}
		\sum_{S \in \Sigma \setminus \Sigma_\mathrm{N}} \int_S [v_{k+1}]_S \cdot (1 - \Pi_S^k) \sigma \nu_S \d{s} \lesssim \|\D_\pw v_{k+1}\|_p\|h_\Mcal\D_\pw(1 - \Pi_\Mcal^k) \sigma\|_{p'}.
	\end{align*}
	Since $\|\sigma_k\|_{p'} = \sup_{\Phi \in L^p(\Omega;\M)\setminus\{0\}} \int_\Omega \sigma_k : \Pi_\Mcal^k \Phi \d{x}/\|\Phi\|_{p}$, the combination of the two previously displayed formula with $\|\D_\pw v_{k+1}\|_p \lesssim \|\Phi_k\|_p \lesssim \|\Phi\|_p$ for $\Phi_k \coloneqq \Pi_{\Mcal}^k \Phi$ from the stability of $L^2$ projections yields
	\begin{align}\label{ineq:proof-convergence-rates-1}
		\|\sigma_k\|_{p'} \lesssim \osc(\div \sigma,\Mcal) + \|h_\mathcal{\Mcal}\D_\pw(1 - \Pi_\Mcal^k) \sigma\|_{p'} \lesssim h_{\max}^{k+1}.
	\end{align}
	The triangle inequality and the stability of the $L^2$ projection $\Pi_S^k$ show $\|\Pi_S^k (\tau - (\mathcal{R}^*_h \I_W \tau)|_K)\nu_S\|_{L^{p'}(S)} \lesssim \|\tau - \Pi_K^k \tau\|_{L^{p'}(S)} + \|\sigma_k|_K \nu_S\|_{L^{p'}(S)}$ for any $K \in \Mcal$ and $S \in \Sigma(K)$. Therefore, the trace inequality, its discrete version, and the Poincar\'e inequality provide
	\begin{align}\label{ineq:proof-convergence-rates-2}
		\gamma_h(\I_W \sigma) \lesssim h_{\max}^{s/(p-1)+p'-1}(\|\D_\pw(1 - \Pi_\Mcal^k) \sigma\|_{p'}^{p'} + \|h_\Mcal^{-1}\sigma_k\|_{p'}^{p'})&\nonumber\\
		\lesssim h^{s/(p-1)+p'(k+1)-1}_{\max}&.
	\end{align}
	A triangle inequality and 
	\eqref{ineq:proof-convergence-rates-1}
	show
	\begin{align}\label{ineq:convergence-rates-sigma-stab}
		\|\sigma - \mathcal{R}_h^* \I_W \sigma\|_{p'} \leq \|(1 - \Pi_\Mcal^k) \sigma\|_{p'} + \|\sigma_k\|_{p'} \lesssim h_\mathrm{max}^{k+1}.
	\end{align}
	The convergence rates of the remaining terms in the error estimate \eqref{ineq:error-estimate} are well established in the HHO literature \cite{DiPietroErn2012,DiPietroErnLemaire2014,DiPietroErn2015,DiPietroDroniou2017,DiPietroDroniou2020} and briefly recalled below. The H\"older inequality, \eqref{ineq:growth-DW}, and $\|\D_h \I_V u\|_p + \|\mathcal{R}_h^* \I_W \sigma\|_{p'} \lesssim \|\D u\|_p + \|h_\Mcal \D_\pw \sigma\|_{p'}$ from \eqref{ineq:proof-convergence-rates-1} and \eqref{ineq:convergence-rates-sigma-stab} imply
	\begin{align}
		-\int_\Omega (1 - \Pi_\Mcal^k) \xi:(1 - \Pi_\Mcal^k) \D u \d{x} + \int_\Omega (\D_h \I_V u - \varrho) : (\sigma - \mathcal{R}_h^* \I_W \sigma) \d{x}\nonumber&\\
		\lesssim \|(1 - \Pi_\Mcal^k) \D u\|_p + \|\sigma - \mathcal{R}_h^* \I_W \sigma\|_{p'} \lesssim h_{\max}^{k+1}&.
	\end{align}
	The convergence rates
	\begin{align}\label{ineq:convergence-rates-stab}
		\s_h(\I_V u) \lesssim h_{\max}^{-s + p - 1} \|\D_\pw(1 - \Pi_{\Mcal}^{k+1}) u\|_p^p \lesssim h_{\max}^{-s + (k+2)p - 1}
	\end{align}
	follow
	from the stability of the $L^2$ projection, the trace, and the Poincar\'e inequality.
	These arguments, $[u]_S = 0$ along $S \in \Sigma\setminus\Sigma_\mathrm{N}$, and the H\"older inequality prove
	\begin{align}\label{ineq:convergence-rates-trace-integrals}
		&- \sum_{S \in \Sigma \setminus \Sigma_\mathrm{N}} \int_S [\mathcal{R}_h \I_V u]_S \cdot (1 - \Pi_S^k)\sigma \nu_S \d{s}\nonumber\\
		&\qquad\lesssim h_{\max}\|\D_\pw(u - \mathcal{R}_h \I_V u)\|_p\|\D_\pw(1 - \Pi_\Mcal^k)\sigma\|_{p'} \lesssim h_{\max}^{2(k+1)}.
	\end{align}
	The regularity assumption on $\sigma$ ensures $f = - \div \sigma \in W^{k,p'}(\Mcal;\M)$. This and a piecewise application of the Poincar\'e inequality show
	\begin{align}\label{ineq:convergence-rates-oscillation}
		&\int_\Omega (f - f_h) \cdot (u - \mathcal{R}_h \I_V u) \d{x} \lesssim \osc(f,\Mcal)\|\D_\pw(u - \mathcal{R}_h \I_V u)\|_p \lesssim h_{\max}^{2(k+1)}.
	\end{align}	
	Since $\min\{-s + (k+2) p - 1, s/(p-1) + p'(k+1) - 1\} \geq k+1$ for $p-1-(k+1) \leq s \leq (k+2)(p-1)$, the combination of \eqref{ineq:convergence-rates-sigma-stab}--\eqref{ineq:convergence-rates-oscillation} concludes the proof.
\end{proof}

If $E_h$ is coercive, then the error estimate \eqref{ineq:error-estimate} also bounds the error arising from the coercivity of $E_h$.
Suppose that $\Psi \in C^1(\M)$ and $\psi(x,\bullet) \in C^1(\R^m)$ for a.e.~$x \in \Omega$.
Let $\Phi : L^p(\Omega;\M) \times L^p(\Omega;\M) \to \mathbb{R}_{\geq 0}$ and $\varphi: L^p(\Omega;\R^m) \times L^p(\Omega; \R^m) \to \mathbb{R}_{\geq 0}$ be given such that any $\alpha, \beta \in L^p(\Omega;\M)$ and $a, b \in L^p(\Omega;\R^m)$ satisfy
\begin{align}
	\Phi(\alpha,\beta) &\leq \int_\Omega (\Psi(\alpha) - \Psi(\beta) - \D \Psi(\beta) : (\alpha - \beta)) \d{x},\label{ineq:assumption-cc-W}\\
	\varphi(a,b) &\leq \int_\Omega (\psi_h(x,a) - \psi_h(x,b) - \nabla_u \psi_h(x,b) \cdot (a-b)) \d{x}.\label{ineq:assumption-cc-psi}
\end{align}
The discrete Euler-Lagrange equations associated with the minimization of \eqref{def:discrete-energy} read, for any $v_h = (v_\Mcal, v_\Sigma) \in V_\mathrm{D}(\Mcal)$,
\begin{align}\label{eq:dELE}
	\int_\Omega (\sigma_\Mcal : \D_h v_h + \D_u \psi_h(x,\Pi_\Mcal^k u_\Mcal) \cdot \Pi_{\Mcal}^k v_\Mcal) \d{x} + \s_h(u_h; v_h) = 0.
\end{align}
The choice
$\alpha \coloneqq \D_h v_h$ and $\beta \coloneqq \D_h u_h$ in \eqref{ineq:assumption-cc-W} provides
\begin{align*}
	\Phi(\D_h u_h, \D_h v_h) \leq \int_\Omega (\Psi(\D_h v_h) - \Psi(\D_h u_h) - \sigma_\Mcal : \D_h(v_h - u_h)) \d{x}
\end{align*}
with the abbreviation $\sigma_\Mcal \coloneqq \Pi_\Mcal^k \D \Psi(\D_h u_h)$.
This and \eqref{eq:dELE} imply
\begin{align}
	\Phi(\D_h u_h, \D_h v_h) \leq E_h(v_h) - E_h(u_h) - \s_h(v_h)/r + \s_h(u_h)/r + \s_h(u_h; v_h - u_h)\label{ineq:proof-cc-discrete}\\
	- \int_\Omega (\psi_h(x,\Pi_{\Mcal}^k v_\Mcal) - \psi_h(x,\Pi_\Mcal^k u_\Mcal) - \nabla_u \psi_h(x, \Pi_\Mcal^k u_\Mcal) \cdot \Pi_\Mcal^k(v_\Mcal - u_\Mcal)) \d{x}.\nonumber
\end{align}
The convexity of the continuously differentiable function $x \in \R^m \mapsto |x|^r$ leads to
\begin{align}\label{ineq:stabilization-convexity}
	0 \leq \s_h(v_h)/r - \s_h(u_h)/r - \s_h(u_h; v_h - u_h).
\end{align}
At this point, we note that $\s_h$ is strongly convex \cite[Lemma 5.4(e)]{CarstensenTran2022}. This allows for additional error control in the stabilization, which is neglected because it is not required in the analysis of this paper.
The combination of \eqref{ineq:proof-cc-discrete}--\eqref{ineq:stabilization-convexity} with \eqref{ineq:assumption-cc-psi}
results in
\begin{align}\label{ineq:assmption-cc}
	e_h(u_h,v_h) \leq E_h(v_h) - E_h(u_h) \quad\text{for any } v_h \in V_\mathrm{D}(\Mcal)
\end{align}
with $e_h(u_h,v_h) \coloneqq \Phi(\D_h u_h, \D_h v_h) + \varphi(\Pi_\Mcal^k u_\Mcal, \Pi_\Mcal^k v_\Mcal)$.
If \eqref{def:lower-order-terms} holds, then $\varphi \equiv 0$.
In this case, the choice $v_h \coloneqq \I_V u$ in \eqref{ineq:assmption-cc} and \Cref{cor:convergence-rates} lead to the convergence rates $e_h(u_h,\I_V u) \lesssim h_{\max}^{k+1}$ under the assumptions of \Cref{cor:convergence-rates}.
It is well known \cite{GlowinskiMarrocco1975,Tyukhtin1982,Chow1989} that the convergence rates in \Cref{cor:convergence-rates} can be improved under additional coercivity assumptions on $\Psi$.
In the following, the application of the arguments from the aforementioned references will be briefly outlined.

\begin{example}[strongly convex with Lipschitz continuous gradient]\label{ex:p-Laplace}
	Suppose that $\Psi \in C^1(\M)$ satisfy \eqref{ineq:2-sided-growth} with $p \geq 2$ and there exist positive constants $\newcnstS\label{cnst:cc}, \newcnstS\label{cnst:cc-primal}$ with
	\begin{align}
		\cnstS{cnst:cc}|\D \Psi(A) - \D \Psi(B)|^2 &\leq (1 + |A|^{p-2} + |B|^{p-2})\nonumber\\
		&\quad\times (\Psi(A) - \Psi(B) - \D \Psi(B) : (A - B)),\label{ineq:cc-dual}\\
		\cnstS{cnst:cc-primal}|A - B|^p &\leq \Psi(A) - \Psi(B) - \D \Psi(B) : (A - B)\label{ineq:cc-primal}
	\end{align}
	for any $A, B \in \M$.
	Then \eqref{ineq:assumption-cc-W} holds with $\Phi(\alpha, \beta) \coloneqq \frac{\cnstS{cnst:cc-dual}}{2}\|\D \Psi(\alpha) - \D \Psi(\beta)\|^2_{p'}/(1 + \|\alpha\|_p^p + \|\beta\|_p^p)^{(2-p')/p'} + \frac{\cnstS{cnst:cc-primal}}{2}\|\alpha - \beta\|^p_p$ and $\newcnstS\label{cnst:cc-dual} \coloneqq \cnstS{cnst:cc}/3$ \cite{CPlechac1997,CarstensenTran2022}. 
	Recall $\varrho$ and $\xi = \D \Psi(\D_h \I_V u)$ from \Cref{thm:error-estimate}. Let the assumptions of \Cref{cor:convergence-rates} hold.
	The combination of \eqref{ineq:assumption-cc-W} with $\Psi(\alpha) - \Psi(\beta) \leq - \D \Psi(\alpha) : (\beta - \alpha)$ a.e.~in $\Omega$ from the convexity of $\Psi$ implies the monotonicity
	\begin{align}\label{ineq:cc-monotone-DW}
		\Phi(\alpha,\beta) \leq \int_\Omega (\D \Psi(\alpha) - \D \Psi(\beta)) :(\alpha - \beta) \d{x} \quad\text{for any } \alpha, \beta \in L^p(\Omega;\M)
	\end{align}
	with two immediate consequences.
	On the one hand, \eqref{ineq:cc-monotone-DW} with $\alpha \coloneqq \D u$, $\beta \coloneqq \D_h \I_V u$, $\xi = \D \Psi(\D_h \I_V u)$, a H\"older inequality, and $\|\D u\|_p + \|\D_h \I_V u\|_p \lesssim 1$ lead to $\|\sigma - \xi\|_{p'} \lesssim \|\D u - \D_h \I_V u\|_p$.
	This,
	$\|(1 - \Pi_\Mcal^k)\xi\|_{p'} \lesssim \|\Pi_\Mcal^k \sigma - \xi\|_{p'}$ from \eqref{ineq:quasi-opt-L2},
	and a triangle inequality provide
	\begin{align}\label{ineq:extra-convergence-rates-1}
		\|(1 - \Pi_{\Mcal}^k) \xi\|_{p'} &\lesssim \|(1 - \Pi_\Mcal^k) \sigma\|_{p'} + \|\sigma - \xi\|_{p'}\nonumber\\
		&\lesssim \|(1 - \Pi_\Mcal^k) \sigma\|_{p'} + \|\D u - \D_h \I_V u\|_p \lesssim h^{k+1}_{\max}.
	\end{align}
	On the other hand, \eqref{ineq:cc-monotone-DW} with $\alpha \coloneqq \D u$, $\beta \coloneqq \varrho$, a H\"older inequality, and $\D \Psi(\varrho) = \mathcal{R}_h^* \I_W \sigma$ from $\varrho \in \partial \Psi^*(\mathcal{R}^*_h \I_W \sigma)$ \cite[Theorem 23.5 (a*)]{Rockafellar1970} imply $\|\D u - \varrho\|_{p} \lesssim \|\sigma - \mathcal{R}^*_h \I_W \sigma\|_{p'}^{1/(p-1)}$.
	This and a triangle inequality provide
	\begin{align}\label{ineq:extra-convergence-rates-2}
		\|\D_h \I_V u - \varrho\|_p &\leq \|(1 - \Pi_{\Mcal}^k) \D u\|_p + \|\D u - \varrho\|_p\nonumber\\
		&\lesssim \|(1 - \Pi_{\Mcal}^k) \D u\|_p + \|\sigma - \mathcal{R}_h^* \I_W \sigma\|_{p'}^{1/(p-1)} \lesssim h_\mathrm{max}^{(k+1)/(p-1)}.
	\end{align}
	The bounds \eqref{ineq:extra-convergence-rates-1}--\eqref{ineq:extra-convergence-rates-2} lead to the convergence rates
	\begin{align}
		-\int_\Omega (1 - \Pi_\Mcal^k) \xi:(1 - \Pi_\Mcal^k) \D u \d{x} \lesssim h_{\max}^{2(k+1)},\label{ineq:suboptimal-rates-1}\\
		\int_\Omega (\D_h \I_V u - \varrho) : (\sigma - \mathcal{R}_h^* \I_W \sigma) \d{x} \lesssim h_{\max}^{p'(k+1)}.\label{ineq:suboptimal-rates-2}
	\end{align}
	This, \eqref{ineq:proof-convergence-rates-2}, and \eqref{ineq:convergence-rates-stab}--\eqref{ineq:convergence-rates-oscillation}
	conclude $e_h(u_h, \I_V u) \leq E_h(\I_V u) - E_h(u_h) \lesssim h_{\max}^{p'(k+1)}$ for any $p-1 \leq s \leq (k+1)(p - p') + p - 1$.
	In the case $1 < p \leq 2$, suppose that the coercivity \eqref{ineq:assumption-cc-W} holds for $\Phi(\alpha,\beta) \coloneqq \newcnstS\|\alpha - \beta\|^2_p/(\|\alpha\|_p^p + \|\beta\|_p^p)^{(2-p)/2} + \newcnstS\|\D \Psi(\alpha) - \D \Psi(\beta)\|_{p'}^{p'}$.
	Then, by similar arguments, $\|\sigma - \xi\|_{p'} \lesssim \|\D u - \D_h \I_V u\|_p^{1/(p'-1)}$ and $\|\D u - \varrho\|_p \lesssim \|\sigma - \mathcal{R}_h^* \I_W \sigma\|_{p'}$. Thus, the convergence rates $e_h(u_h, \I_V u) \leq E_h(\I_V u) - E_h(u_h) \lesssim h_{\max}^{p(k+1)}$ hold for any $p(k+1)(p-2) + p - 1 \leq s \leq p - 1$.
\end{example}

\begin{remark}[$p$-Laplace]\label{rem:p-Laplace}
	Given $f \in L^{p'}(\Omega;\R^m)$, the $p$-Laplace problem seeks the unique solution $u \in V_\mathrm{D}$ to
	$- \div \sigma = f$ in $\Omega$ with $\sigma \coloneqq |\D u|^{p-2} \D u$.
	Then $u$ minimizes $E$ in $V_\mathrm{D}$ with the energy density $\Psi(A) \coloneqq |A|^p/p$ for any $A \in \M$.
	Since $\Psi$ and $\Psi^*$ are strictly convex, the minimizer $u$ of $E$ in $V_\mathrm{D}$ and the maximizer $\sigma \coloneqq \D \Psi (\D u)$ of $E^*$ in $W_\mathrm{N}$ are unique.
	The coercivity assumptions of \Cref{ex:p-Laplace} are satisfied and so, the convergence rates therein hold. They coincide with \cite{GlowinskiMarrocco1975} for $p \geq 2$ and \cite{Tyukhtin1982,Chow1989} for $1 < p \leq 2$.
	The consequence $\|\D_h (\I_V u - u_h)\|_p \lesssim h^{(k+1)p/2}$ of the latter provides an improvement to existing rates in the HHO literature \cite{CarstensenTran2021,DiPietroDroniou2021}.
	The derivation of quadratic convergence rates for $E_h(\I_V u) - E_h(u_h)$
	appears possible using the coercivity of $\Psi$ with respect to the so-called quasi-norm \cite{BarrettLiu1993,DieningKreuzer2008}, cf.~\cite{DieningKoenerRuzickaToulopoulos2014} for the dG method and \cite{Bartels2021b,kaltenbach2022} for the Crouzeix-Raviart FEM.
	This approach, however, is currently restricted to the lowest-order case due to the lack of higher convergence rates for interpolation in Orlicz-Sobolev spaces \cite{DieningRuzicka2007}.
\end{remark}

\section{A~posteriori}\label{sec:a-posteriori}
The main goal of this section is the design of a $W^{p'}(\div,\Omega;\M)$ conforming postprocessing from a computed discrete minimizer $u_h = (u_\Mcal, u_\Sigma)$ of \eqref{def:discrete-energy} for the derivation of primal-dual error estimates.
Recall the abbreviation $\sigma_\Mcal = \Pi_\Mcal^k \D \Psi(\D_h u_h)$ for $\Psi \in C^1(\M)$.
In contrast to the unstabilized HHO method on regular triangulations into simplices from \cite{CarstensenTran2021}, $\sigma_\Mcal \notin W_\mathrm{N}$ has to be expected.
In fact, the discrete Euler-Lagrange equations \eqref{eq:dELE} imply the following result also known as \emph{equilibrium of traces}  \cite{BottiDiPietroSochala2017,AbbasErnPignet2018}.
\begin{corollary}[normal jump of $\sigma_\Mcal$]\label{cor:normal-jump}
	Suppose that $\Psi \in C^1(\M)$ and $\psi_h(x,\bullet) \in C^1(\R^m)$ for a.e.~$x \in \Omega$.
	For any $S \in \Sigma\setminus\Sigma_\mathrm{D}$, $\sigma_\Mcal = \Pi_\Mcal^k \D \Psi(\D_h u_h) \in P_k(\Mcal;\M)$ satisfies, for $T_{K,S}$ from \eqref{def:trace-operator}, that
	\begin{align}
		[\sigma_\Mcal \nu_S]_S = - \sum_{K \in \Mcal, S \in \Sigma(K)} h_S^{-s} T_{K,S} u_h.
		\label{eq:sigma-T-normal-trace}
	\end{align}
\end{corollary}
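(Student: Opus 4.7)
The plan is to test the discrete Euler--Lagrange equation \eqref{eq:dELE} with a carefully chosen hat-like ansatz function $v_h$ whose cell component vanishes and whose face component is localized to the face $S$.

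Fix $S \in \Sigma \setminus \Sigma_\mathrm{D}$ and an arbitrary $w_S \in P_k(S;\R^m)$. Define $v_h = (0, v_\Sigma) \in V_\mathrm{D}(\Mcal)$ by setting $v_S \coloneqq w_S$ and $v_{S'} \coloneqq 0$ for all other faces $S' \in \Sigma \setminus \{S\}$ (the boundary condition is respected because $S$ is not a Dirichlet face). Since $v_\Mcal \equiv 0$, the projection $\Pi_\Mcal^k v_\Mcal$ vanishes and the second volume term in \eqref{eq:dELE} drops out. The definition \eqref{def:gradient-reconstruction} of $\D_h$ with test function $\tau_k = \sigma_\Mcal \in P_k(\Mcal;\M)$ then collapses to
\begin{align*}
\int_\Omega \sigma_\Mcal : \D_h v_h \d{x} = \int_S w_S \cdot [\sigma_\Mcal \nu_S]_S \d{s},
\end{align*}
because the volume contribution $-\int_\Omega v_\Mcal \cdot \div_\pw \sigma_\Mcal \d{x}$ is zero and only the skeletal contribution along $S$ survives.

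For the stabilization term, the definition \eqref{def:stabilization-primal} together with $v_K = 0$ for every $K \in \Mcal$ reduces to
\begin{align*}
\s_h(u_h; v_h) = \sum_{K \in \Mcal, S \in \Sigma(K)} h_S^{-s} \int_S T_{K,S} u_h \cdot w_S \d{s},
\end{align*}
where only the pair(s) $(K,S)$ with $S \in \Sigma(K)$ yield nonzero contributions on $S$. Substituting both expressions into \eqref{eq:dELE} gives
\begin{align*}
\int_S w_S \cdot \Big( [\sigma_\Mcal \nu_S]_S + \sum_{K \in \Mcal, S \in \Sigma(K)} h_S^{-s} T_{K,S} u_h \Big) \d{s} = 0.
\end{align*}

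To conclude, observe that $[\sigma_\Mcal \nu_S]_S$ and each $T_{K,S} u_h$ belong to $P_k(S;\R^m)$ (the former because $\sigma_\Mcal|_K \in P_k(K;\M)$ has polynomial normal traces, the latter by the very definition \eqref{def:trace-operator} involving $\Pi_S^k$). Since $w_S \in P_k(S;\R^m)$ is arbitrary, the $L^2(S)$-orthogonality yields the claimed identity \eqref{eq:sigma-T-normal-trace}. The only subtle point is the localization of $v_h$ to a single face and the polynomiality check that turns the variational identity into a pointwise equation on $S$; no hard estimates are required.
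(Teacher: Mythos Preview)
Your proof is correct and follows essentially the same approach as the paper: both test the discrete Euler--Lagrange equations \eqref{eq:dELE} with $v_h=(0,v_\Sigma)$ localized to the single face $S\in\Sigma\setminus\Sigma_\mathrm{D}$ and then use that the resulting expression lies in $P_k(S;\R^m)$ to pass from orthogonality to the pointwise identity. The only cosmetic difference is that the paper first records the intermediate identity \eqref{eq:dELE-v2} for general $v_h$ before specializing, whereas you specialize from the outset.
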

\begin{proof}
	The discrete Euler-Lagrange equations \eqref{eq:dELE} and the definition \eqref{def:gradient-reconstruction} of $\D_h$ imply, for any $v_h = (v_\Mcal, v_\Sigma) \in V_\mathrm{D}(\Mcal)$, that
	\begin{align}\label{eq:dELE-v2}
		\int_\Omega \Pi_\Mcal^k v_\Mcal \cdot &(\nabla_u \psi_h(x, \Pi_\Mcal^k u_\Mcal) - \div_\pw \sigma_\Mcal) \d{x}\nonumber\\
		& + \sum_{S \in \Sigma\setminus\Sigma_\mathrm{D}} \int_S v_S \cdot [\sigma_\Mcal \nu_S]_S \d{s} + \s(u_h;v_h) = 0.
	\end{align}
	Fix a face $S \in \Sigma\setminus\Sigma_\mathrm{D}$. The choice $v_\Mcal \equiv 0$ and $v_\Sigma \in P_k(\Sigma\setminus\Sigma_\mathrm{D};\mathbb{R}^m)$ with $v_\Sigma|_E \equiv 0$ for all $E \in \Sigma$, $E \neq S$ in \eqref{eq:dELE-v2} provides
	\begin{align*}
		\int_S v_S\cdot([\sigma_\Mcal\nu_S]_S + \sum_{K \in \Mcal, S \in \Sigma(K)} h_S^{-s} T_{K,S} u_h) \d{s} = 0
	\end{align*}
	for all $v_S \in P_k(S;\R^m)$.
	This orthogonality concludes \eqref{eq:sigma-T-normal-trace}.
\end{proof}

Although $\sigma_\Mcal \notin W_\mathrm{N}$ holds in general, it can be utilized to construct an appropriate postprocessing $\sigma_0 \in \RT_k(\mathcal{T};\mathbb{M}) \cap W_\mathrm{N}$ in a matching simplicial submesh $\mathcal{T}$ of $\mathcal{M}$ as follows. (We refer to \cite[Definition 1.37]{DiPietroErn2012} for a precise definition of a matching submesh.)

\begin{lemma}[Raviart-Thomas reconstruction]\label{lem:discrete-dual-variable}
	Suppose that $\Psi \in C^1(\M)$ and $\psi_h(x,\bullet) \in C^1(\R^m)$ for a.e.~$x \in \Omega$. Then the discrete dual variable $\sigma_h = (\sigma_\Mcal, \sigma_\Sigma) \in W_\mathrm{N}(\Mcal)$ with $\sigma_\Mcal = \Pi_\Mcal^k \D \Psi(\D_h u_h) \in P_k(\Mcal;\M)$ and
	\begin{align*}
		\sigma_S \coloneqq \begin{cases}
			\{\sigma_\Mcal\nu_S\}_S + h_S^{-s} (T_{K_+,S} u_h - T_{K_-,S} u_h)/2 \in P_k(S;\R^m) &\mbox{if } S \in \Sigma(\Omega),\\
			\sigma_\Mcal\nu_S + h_S^{-s} T_{K_+,S} u_h \in P_k(S;\R^m) &\mbox{if } S \in \Sigma_\mathrm{D}
		\end{cases}
	\end{align*}
	satisfies $\div_h \sigma_h = \Pi_\Mcal^k \nabla_u \psi_h(x, \Pi_\Mcal^k u_\Mcal) \in P_k(\Mcal;\R^m)$.
	Given a matching simplicial submesh $\Tcal$ of $\Mcal$ with the set $\Fcal$ of faces, let $\sigma_0 \in \RT_k(\Tcal;\M) \cap W_\mathrm{N}$ be the unique Raviart-Thomas finite element function defined by the weights
	\begin{align}\label{def:sigma-RT}
		\begin{split}
			\Pi_\Tcal^{k-1} \sigma_0 &= \Pi_\Tcal^{k-1} \sigma_\Mcal,\\
			\sigma_0 \nu_F &= \begin{cases}
				\sigma_S &\mbox{if } F \in \Fcal, F \subset S \text{ for some } S \in \Sigma,\\
				\sigma_K \nu_F &\mbox{if } F\in \Fcal, F \subset K \text{ for some } K \in \mathcal{M} \text{ but } F \not\subset \partial K.
			\end{cases}
		\end{split}
	\end{align}
	Then $\Pi_\Mcal^k \div \sigma_0 = \Pi_\Mcal^k \nabla_u \psi_h(x,\Pi_\Mcal^k u_\Mcal)$.
	%	\begin{enumerate}[wide]
		%		\item[(b)] Suppose that $\nabla_u \psi_h(\bullet,u_\Mcal(\bullet)) \in P_\ell(\Mcal;\R^m)$, $k = 0$, and $p = 2$, then $\sigma_h$ maximizes the discrete dual energy $E_h^*$ in $W_\mathrm{N}(\Mcal)$ without discrete duality gap
		%		\begin{align*}
			%			E_h^*(\sigma_h) = \max_{\tau_h \in W_\mathrm{N}(\Mcal)} E_h^*(\tau_h) = \min_{v_h \in V_\mathrm{D}(\Mcal)} E_h(v_h) = E_h(u_h).
			%		\end{align*}
		%	\end{enumerate}
\end{lemma}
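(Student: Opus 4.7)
The plan is to prove the two claims of the lemma separately, first establishing the divergence identity for the HHO pair $\sigma_h=(\sigma_\Mcal,\sigma_\Sigma)$ and then transferring it to the Raviart--Thomas lift $\sigma_0$ via its defining weights.

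\textbf{Step 1: Divergence reconstruction of $\sigma_h$.} The idea is to compute $\int_\Omega \div_h \sigma_h \cdot \varphi_k \d{x}$ from the definition \eqref{def:divergence-reconstruction} for an arbitrary $\varphi_k \in P_k(\Mcal;\R^m)$ and compare it with the discrete Euler--Lagrange equation \eqref{eq:dELE-v2} tested with $v_h \coloneqq (\varphi_k, 0) \in V_\mathrm{D}(\Mcal)$. The latter choice immediately gives
\begin{align*}
\int_\Omega \varphi_k \cdot \div_\pw \sigma_\Mcal \d{x} = \int_\Omega \varphi_k \cdot \nabla_u \psi_h(x,\Pi_\Mcal^k u_\Mcal) \d{x} - \sum_{K \in \Mcal} \sum_{S \in \Sigma(K)} h_S^{-s} \int_S T_{K,S}u_h \cdot \varphi_K \d{s},
\end{align*}
because $\Pi_\Mcal^k \varphi_k = \varphi_k$ and the skeleton sum in \eqref{eq:dELE-v2} vanishes for $v_\Sigma \equiv 0$. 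On the other hand, a piecewise integration by parts on $\Mcal$ decomposes $-\int_\Omega \D_\pw \varphi_k : \sigma_\Mcal \d{x}$ into $\int_\Omega \varphi_k \cdot \div_\pw \sigma_\Mcal \d{x}$ plus face integrals involving $\{\varphi_k\}_S \cdot [\sigma_\Mcal \nu_S]_S$ and $[\varphi_k]_S \cdot \{\sigma_\Mcal \nu_S\}_S$. Inserting the definition of $\sigma_S$ and using Corollary~\ref{cor:normal-jump} to rewrite $[\sigma_\Mcal\nu_S]_S$ in terms of the trace operators $T_{K,S}u_h$, an algebraic identity of the form $\{\varphi\}_S (a+b) + [\varphi]_S (a-b)/2 = \varphi_{K_+}a + \varphi_{K_-}b$ (already exploited in the proof of \Cref{thm:weak-duality}) reassembles the face contributions into $\sum_{K}\sum_{S \in \Sigma(K)\setminus \Sigma_\mathrm{N}} h_S^{-s}\int_S \varphi_K \cdot T_{K,S}u_h \d{s}$ plus a Neumann remainder that again cancels by Corollary~\ref{cor:normal-jump}. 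Substituting the Euler--Lagrange identity then collapses everything to $\int_\Omega \varphi_k \cdot \nabla_u\psi_h(x,\Pi_\Mcal^k u_\Mcal)\d{x}$, which yields $\div_h\sigma_h = \Pi_\Mcal^k \nabla_u\psi_h(x,\Pi_\Mcal^k u_\Mcal)$.

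\textbf{Step 2: Well-posedness of $\sigma_0$.} The Raviart--Thomas degrees of freedom on each simplex $T \in \Tcal$ consist of the moments of $\sigma_0$ of degree $\leq k-1$ on $T$ and the moments of $\sigma_0\nu_F$ of degree $\leq k$ on each face $F$ of $T$. The definition \eqref{def:sigma-RT} prescribes precisely these quantities, and the right-hand sides lie in the required polynomial spaces ($\sigma_S|_F \in P_k(F;\R^m)$ or $\sigma_\Mcal|_K\nu_F \in P_k(F;\R^m)$). Continuity of the normal component across any interior face $F \in \Fcal$ follows since either both sides are prescribed by the same $\sigma_S$ (if $F$ lies on a face of $\Mcal$) or by the same polynomial $\sigma_\Mcal|_K$ (if $F$ is interior to a cell $K \in \Mcal$). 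The condition $\sigma_S = 0$ on $S \in \Sigma_\mathrm{N}$ gives $\sigma_0\nu = 0$ on $\Gamma_\mathrm{N}$, so $\sigma_0 \in \RT_k(\Tcal;\M) \cap W_\mathrm{N}$.

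\textbf{Step 3: The identity $\Pi_\Mcal^k\div\sigma_0 = \Pi_\Mcal^k\nabla_u\psi_h(x,\Pi_\Mcal^k u_\Mcal)$.} For $\varphi_k \in P_k(\Mcal;\R^m)$, a piecewise integration by parts on the submesh $\Tcal$ yields
\begin{align*}
\int_\Omega \div\sigma_0 \cdot \varphi_k \d{x} = -\int_\Omega \sigma_0 : \D_\pw\varphi_k \d{x} + \sum_{F \in \Fcal}\int_F [\varphi_k]_F \cdot \sigma_0 \nu_F \d{s},
\end{align*}
where the jump convention is with respect to $\Tcal$. Faces $F$ that are interior to some $K \in \Mcal$ contribute nothing because $\varphi_k|_K$ is continuous across them, while faces $F \subset S$ for some $S \in \Sigma$ assemble into $\sum_{S \in \Sigma\setminus\Sigma_\mathrm{N}} \int_S [\varphi_k]_S \cdot \sigma_S\d{s}$ (the Neumann faces drop out). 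Since $\D_\pw \varphi_k \in P_{k-1}(\Tcal;\M)$, the weight $\Pi_\Tcal^{k-1}\sigma_0 = \Pi_\Tcal^{k-1}\sigma_\Mcal$ implies $\int_\Omega \sigma_0 : \D_\pw \varphi_k \d{x} = \int_\Omega \sigma_\Mcal : \D_\pw\varphi_k\d{x}$. Comparing with the definition \eqref{def:divergence-reconstruction} of $\div_h\sigma_h$ and using $\sigma_S = 0$ on Neumann faces, the right-hand side equals $\int_\Omega \div_h\sigma_h \cdot \varphi_k\d{x}$, which by Step~1 equals $\int_\Omega \nabla_u\psi_h(x,\Pi_\Mcal^k u_\Mcal)\cdot\varphi_k\d{x}$. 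The arbitrariness of $\varphi_k$ concludes $\Pi_\Mcal^k\div\sigma_0 = \Pi_\Mcal^k\nabla_u\psi_h(x,\Pi_\Mcal^k u_\Mcal)$.

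The main obstacle is the algebraic bookkeeping in Step~1, where the carefully chosen definition of $\sigma_S$ has to be reconciled with the mixture of jumps and averages produced by integration by parts. This is the point where Corollary~\ref{cor:normal-jump} enters decisively, both to rewrite $[\sigma_\Mcal\nu_S]_S$ on interior faces and to make the Neumann boundary contributions cancel. Step~3 is then comparatively straightforward once the match between the Raviart--Thomas degrees of freedom and the HHO divergence reconstruction is recognized.
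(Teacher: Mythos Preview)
Your proof is correct and Step~3 matches the paper's argument almost verbatim. The only genuine difference is in Step~1: the paper tests the Euler--Lagrange system with $v_h=(v_\Mcal,v_\Sigma)$ where $v_S\coloneqq\{v_\Mcal\}_S$ on $\Sigma\setminus\Sigma_\mathrm{D}$, so that the term $\int_S(v_S-\{v_\Mcal\}_S)\cdot[\sigma_\Mcal\nu_S]_S\d{s}$ in \eqref{eq:dELE-v3} drops out directly and the stabilisation is computed from the resulting explicit values of $v_S-v_K$. You instead test with $v_\Sigma\equiv 0$ and compensate by invoking Corollary~\ref{cor:normal-jump} to express $[\sigma_\Mcal\nu_S]_S$ in terms of $T_{K,S}u_h$. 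Both routes are legitimate: the paper's is self-contained (Corollary~\ref{cor:normal-jump} is never used in its proof of the lemma), while yours reuses the corollary and makes the role of the equilibrated tractions more visible. The algebraic identity you cite, $\{\varphi\}(a+b)+[\varphi](a-b)/2=\varphi_{K_+}a+\varphi_{K_-}b$, together with the Neumann case of Corollary~\ref{cor:normal-jump} for the boundary remainder, indeed reproduces exactly the $\sum_K\sum_{S\in\Sigma(K)}h_S^{-s}\int_S\varphi_K\cdot T_{K,S}u_h\d{s}$ needed to cancel against the stabilisation in your Euler--Lagrange identity, so the bookkeeping closes.
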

The weights in \eqref{def:sigma-RT} are the degrees of freedom for Raviart-Thomas finite element functions \cite[Section 14.3]{ErnGuermond2021} and therefore, $\sigma_0$ is uniquely defined by \eqref{def:sigma-RT}.
\begin{proof}
	Given any $v_h = (v_\Mcal, v_\Sigma) \in V_\mathrm{D}(\Mcal)$ with $v_\Mcal \in P_k(\Mcal;\R^m)$, a piecewise integration by parts followed by the definition \eqref{def:divergence-reconstruction} of $\div_h$ leads to
	\begin{align*}
		- \int_\Omega \div_\pw \sigma_\Mcal \cdot v_\Mcal \d{x} = -\int_\Omega \div_h \sigma_h \cdot v_\Mcal \d{x} - \sum_{S \in \Sigma\setminus\Sigma_\mathrm{D}} \{v_\Mcal\}_S \cdot [\sigma_\Mcal \nu_S]_S \d{s}&\\
		+ \sum_{S \in \Sigma\setminus\Sigma_\mathrm{N}} [v_\Mcal]_S \cdot (\sigma_S - \{\sigma_\Mcal \nu_S\}_S) \d{s}&.
	\end{align*}
	This and the definition \eqref{def:gradient-reconstruction} of $\D_h$ imply
	\begin{align*}
		\int_\Omega \sigma_\Mcal : \D_h v_h \d{x} = - \int_\Omega \div_h \sigma_h \cdot v_\Mcal \d{x} + \sum_{S \in \Sigma\setminus\Sigma_\mathrm{D}} (v_S - \{v_\Mcal\}_S) \cdot [\sigma_\Mcal \nu_S]_S \d{s}&\\
		+ \sum_{S \in \Sigma\setminus\Sigma_\mathrm{N}} [v_\Mcal]_S \cdot (\sigma_S - \{\sigma_\Mcal \nu_S\}_S) \d{s}&.
	\end{align*}
	The combination of this with the discrete Euler-Lagrange equations \eqref{eq:dELE} yields
	\begin{align}
		\int_\Omega v_\Mcal \cdot (\nabla_u \psi_h(x, \Pi_\Mcal^k u_\Mcal) - \div_h \sigma_h) \d{x}
		+ \sum_{S \in \Sigma\setminus\Sigma_\mathrm{D}} \int_S (v_S - \{v_\Mcal\}_S) \cdot [\sigma_\Mcal \nu_S]_S \d{s}&\nonumber\\
		+ \sum_{S \in \Sigma\setminus\Sigma_\mathrm{N}} \int_S [v_\Mcal]_S \cdot (\sigma_S - \{\sigma_\Mcal \nu_S\}_S) \d{s} + \s(u_h; v_h) = 0&.
		\label{eq:dELE-v3}
	\end{align}
	Let $v_\Sigma$ be chosen such that $v_S \coloneqq \Pi_S^k\{v_\Mcal\}_S = \{v_\Mcal\}_S$ along $S \in \Sigma\setminus\Sigma_\mathrm{D}$ in \eqref{eq:dELE-v3}. For any $K \in \Mcal$ and $S \in \Sigma(K)$, $v_S - v_K$ equals $- [v_\Mcal]_S$ if $S \in \Sigma_\mathrm{D}$, $0$ if $S \in \Sigma_\mathrm{N}$, and $- \frac{\nu_S \cdot \nu_K|_S}{2} [v_\Mcal]_S$ if $S \in \Sigma(\Omega)$.
	Therefore,
	\begin{align*}
		\s(u_h;v_h) = - \sum_{S \in \Sigma(\Omega)} \frac{h_S^{-s}}{2} \int_S (T_{K_+,S} u_h - T_{K_-,S} u_h) \cdot \Pi_S^k[v_\Mcal]_S \d{s}&\\
		- \sum_{S \in \Sigma_\mathrm{D}} h_S^{-s} \int_S T_{K_+,S} u_h \cdot \Pi_S^k v_\Mcal \d{s}&.
	\end{align*}
	This, the definition of $\sigma_\Sigma$ in \Cref{lem:discrete-dual-variable}, and \eqref{eq:dELE-v3} conclude	
	\begin{align*}
		\int_\Omega v_\Mcal \cdot (\nabla_u \psi_h(x,\Pi_\Mcal^k u_\Mcal) - \div_h \sigma_h) \d{x} = 0 \quad\text{for any } v_\Mcal \in P_k(\Mcal;\R^m),
	\end{align*}
	whence $\div_h \sigma_h = \Pi_\Mcal^k \nabla_u \psi_h(x,\Pi_\Mcal^k u_\Mcal)$.
	Given any $v_k \in P_k(\mathcal{M};\mathbb{R}^m)$, a piecewise integration by parts and $\D_\pw v_k \in P_{k-1}(\Mcal;\M)$ show
	\begin{align*}
		\int_\Omega \div \sigma_0 \cdot v_k \d{x} = - \int_\Omega \sigma_\Mcal : \D_\pw v_k + \sum_{F \in \Fcal, F \subset \partial \Mcal} \int_F [v_k]_F \cdot \sigma_0 \nu_F \d{s}
	\end{align*}
	because the function $v_k$ only jumps on the skeleton $\partial \Mcal = \cup_{K \in \Mcal} \partial K$ of $\mathcal{M}$.
	This, the definition of $\sigma_0$ from \eqref{def:sigma-RT}, and of $\div_h$ from \eqref{def:divergence-reconstruction} imply
	\begin{align*}
		\int_\Omega \div \sigma_0 \cdot v_k \d{x} = - \int_\Omega \sigma_\Mcal : \D_\pw v_k + \sum_{S \in \Sigma\setminus\Sigma_\mathrm{N}} \int_S [v_k]_S \cdot \sigma_S \d{s} = \int_\Omega \div_h \sigma_h \cdot v_k \d{x}.
	\end{align*}
	The assertion then follows from $\div_h \sigma_h = \Pi_\Mcal^k \nabla_u \psi_h(x, \Pi_\Mcal^k u_\Mcal)$.
\end{proof}
In order to derive a~posteriori error estimate, we consider throughout the remaining parts of this paper the explicit representation \eqref{def:lower-order-terms} of the lower-order terms.
In particular, $\varphi \equiv 0$ in \eqref{ineq:assumption-cc-psi}.
The following arguments are well known in the literature \cite{Repin2000,Bartels2015,CarstensenLiu2015,CarstensenTran2021,BartelsKaltenbach2023} and sketched below for later reference.
Let $\widetilde{u}$ minimize
\begin{align*}
	E_{\sigma_0}(v) \coloneqq \int_\Omega (\Psi(\D v) + \div \sigma_0 \cdot v) \d{x} \quad\text{among } v \in V_\mathrm{D}.
\end{align*}
The corresponding dual energy reads
\begin{align*}
	E_{\sigma_0}^*(\tau) \coloneqq - \int_\Omega \Psi^*(\tau) \d{x} - \chi_{\div \sigma_0}(\div \tau) \quad\text{for any } \tau \in W_\mathrm{N}.
\end{align*}
Abbreviate $\widetilde{f} \coloneqq f + \div \sigma_0$.
The $L^2$ orthogonality $\widetilde{f} \perp P_k(\Mcal;\R^m)$ from Lemma \ref{lem:discrete-dual-variable}, a H\"older inequality, and a piecewise application of the Poincar\'e inequality imply
\begin{align*}
	E(u) - E_{\sigma_0}(u) = - \int_\Omega \widetilde{f} \cdot u \d{x} \leq c_\mathrm{P}\cnstS{cnst:Du}\osc(\widetilde{f},\Mcal)
\end{align*}
Given a conforming postprocessing $v_0 \in V_\mathrm{D}$, this and $E^*_{\sigma_0}(\sigma_0) \leq E_{\sigma_0}(\widetilde{u})$ prove
\begin{align}\label{ineq:LEB}
	\mathrm{LEB} \coloneqq E^*_{\sigma_0}(\sigma_0) - c_\mathrm{P}\cnstS{cnst:Du}\osc(\widetilde{f},\Mcal) \leq E(u) \leq E(v_0).
\end{align}
In practice, $v_0 \in S^{k+1}_\mathrm{D}(\Tcal;\R^m) \coloneqq P_{k+1}(\Tcal;\R^m) \cap V_\mathrm{D}$ is the nodal average of $u_\Mcal$ in a matching simplicial submesh $\Tcal$ of $\Mcal$.
If $\Psi$ satisfies \eqref{ineq:assumption-cc-W}, then the error arising from the coercivity of $\Psi$ can be bounded as follows. The arguments, that lead to \eqref{ineq:assmption-cc}, apply to the continuous level and prove $e(u,v_0) \leq E(v_0) - E(u)$ with $e(u,v_0) \coloneqq \Phi(\D u, \D v_0)$. This and
\eqref{ineq:LEB} imply the a~posteriori error control
\begin{align}\label{ineq:a-posteriori}
	e(u,v_0) \leq E(v_0) - E(u) \leq E(v_0) - \mathrm{LEB}.
\end{align}
Here, we follow \cite{BartelsKaltenbach2023} and consider the conforming postprocessing $v_0$ as the approximation for $u$.
Therefore, the quantity of interest is $e(u,v_0)$ instead of $\Phi(\D u, \D_h u_h)$, which avoids additional consistency terms.
In fact, the duality gap \eqref{ineq:a-posteriori} is, among other terms, part of the a~posteriori error estimators in \cite{CLiu2015,CarstensenTran2021}.
Thus, this approach leads to a more accurate error control.
In practice, a~posteriori error estimates for $\Phi(\D u, \D_h u_h)$ can be obtained from \eqref{ineq:a-posteriori} and a triangle inequality.
If $\Mcal$ is a regular triangulation into simplices, then $\osc(\widetilde{f}, \Mcal) = \osc(f,\Mcal)$.
Notice that the data oscillation $\osc(\widetilde{f},\Mcal)$ in \eqref{ineq:a-posteriori} may dominate the error for higher-order methods.
Since pointwise control over the divergence of the postprocessing $\sigma_0$ from \Cref{lem:discrete-dual-variable} is lost on polyhedral meshes, it will not vanish even if $f \in P_k(\Mcal;\R^m)$.
Therefore, the usefulness of this reconstruction is restricted in the latter case unless $\osc(\widetilde{f},\Mcal)$ scales quadratically. This is guaranteed for uniformly convex $W$ or in the case of the $p$-Laplace problem.

\begin{remark}[data oscillation]\label{rem:data-oscillation}
	If $\Psi$ satisfies \eqref{ineq:cc-primal}, then \eqref{ineq:a-posteriori} proves
	\begin{align*}
		e(u,v_0) \leq E(v_0) - E(u) \leq E_{\sigma_0}(v_0) - E_{\sigma_0}(u) + \int_\Omega \widetilde{f} \cdot (u - v_0)\d{x}
	\end{align*}
	This, a H\"older, a Poincar\'e, and a Young inequality imply
	\begin{align*}
		e(u,v_0)/p' \leq E_{\sigma_0}(v_0) - E_{\sigma_0}(u) + \cnstS{cnst:osc}\osc(\widetilde{f},\Mcal)^{p'}
	\end{align*}
	with the constant $\newcnstS\label{cnst:osc} \coloneqq c_\mathrm{P}^{p'}\cnstS{cnst:cc-primal}^{-1/(p-1)}/p'$.
	For the $p$-Laplace equation from \Cref{rem:p-Laplace}, the energy density $W$ satisfies \eqref{ineq:a-posteriori} with $\Phi(\alpha,\beta) \coloneqq \newcnstS\label{cnst:plaplace-qn}\|(|\alpha| + |\beta|)^{(p-2)/2} \times (\alpha - \beta)\|_2^2$ for some positive constant $\cnstS{cnst:plaplace-qn} > 0$ \cite{Tyukhtin1982,Chow1989,BarrettLiu1993}. This is known under the label \emph{quasi-norm} with different equivalent versions in \cite{CKlose2003,DieningKreuzer2008}.
	The properties of the N-shift functions \cite{DieningKreuzer2008} lead to, for any $\delta > 0$,
	\begin{align*}
		\int_\Omega \widetilde{f} \cdot (u - v_0) \d{x} \lesssim c_\delta \osc_p(\widetilde{f}, v_0, \Mcal)^2 + \delta \Phi(\D u, \D v).
	\end{align*}
	with $\osc_p(\widetilde{f}, v_0, \Mcal)^2 \coloneqq \sum_{K \in \Mcal} h_K^2 \int_K (|\nabla v_0|^{p-1} + h_K|\widetilde{f}|)^{p'-2}|\widetilde{f}|^2 \d{x}$ and a constant $c_\delta$ depending on $\delta$, cf.~\cite[Ineq.~(3.29)]{kaltenbach2022}.
	By choosing $\delta$ sufficiently small, we obtain the a~posteriori error estimate
	\begin{align*}
		e(u,v_0) \lesssim E_{\sigma_0}(v_0) - E_{\sigma_0}^*(\sigma_0) + \osc_p(\widetilde{f}, v_0, \Mcal)^2.
	\end{align*}
\end{remark}

\section{Numerical examples}\label{sec:numerical-examples}
This section tests the performance of the a~posteriori error control from \Cref{sec:a-posteriori} in three numerical benchmarks on the two dimensional L-shaped domain $\Omega \coloneqq (-1,1)^2 \setminus ([0,1) \times (-1,0])$ with pure Dirichlet boundary $\Gamma_\mathrm{D} = \partial \Omega$.
The initial triangulation in all benchmarks is displayed in \Cref{fig:volume_fraction}(a).
The computer experiments are carried out on regular triangulations into \emph{simplices}.

\subsection{Adaptive mesh-refining algorithm}
Up to data oscillation, the a~posteriori error estimator \eqref{ineq:a-posteriori} consists of the duality gap 
$E_{\sigma_0}(v_0) - E_{\sigma_0}^*(\sigma_0)$.
The localization of this has been discussed in \cite{BartelsKaltenbach2023} and we only restate some relevant details to the benchmarks below.
An integration by parts with $\div \sigma_0 = - f_h$ implies
\begin{align*}
	E_{\sigma_0}(v_0) - E^*_{\sigma_0}(\sigma_0) = \int_\Omega (\Psi(\nabla v_0) - \sigma_0 \cdot \nabla v_0 + \Psi^*(\sigma_0)) \d{x} \eqqcolon \sum_{K \in \Mcal} \eta(K)
\end{align*}
with the local refinement indicator
\begin{align}\label{def:refinement-indicator}
	\eta(K) \coloneqq \int_K (\Psi(\nabla v_0) - \sigma_0 \cdot \nabla v_0 + \Psi^*(\sigma_0)) \d{x}.
\end{align}
Notice from the Fenchel-Young inequality that $\Psi(\nabla v_0) - \sigma_0 \cdot \nabla v_0 + \Psi^*(\sigma_0) \geq 0$ holds pointwise a.e.~in $\Omega$ and so, $\eta(K) \geq 0$. 
The data oscillation is ignored unless it scales quadratically. In this case, its local contribution will be added to \eqref{def:refinement-indicator}.
Adaptive computations utilize the refinement indicator \eqref{def:refinement-indicator} in the standard adaptive mesh-refining loop \cite{CarstensenFeischlPagePraetorius2014} with the D\"orfler marking strategy, i.e., at each refinement step, a subset $\mathfrak{M} \subset \mathcal{M}$ with minimal cardinality is selected such that
\begin{align*}
	\sum\nolimits_{K \in \mathcal{M}} \eta(K) \leq \frac{1}{2}\sum\nolimits_{K \in \mathfrak{M}} \eta(K).
\end{align*}
The convergence history plots display the quantities of interest against the number of degrees of freedom $\mathrm{ndof}$ in a log-log plot.
(Recall the scaling $\mathrm{ndof} \approx h^{-2}_\mathrm{max}$ for uniform meshes.)
Solid lines indicate adaptive, while dashed lines are associated with uniform mesh refinements.

The discrete minimization problem \eqref{def:discrete-energy} is solved by an iterative solver \texttt{fminunc} from the MATLAB standard library in an extension of the data structures and the short MATLAB programs \cite{AlbertyCFunken1999}.
The first and (piecewise) second derivatives of $W$ have been provided for the trust-region quasi-Newton scheme with parameters of \texttt{fminunc} set to \texttt{FunctionTolerance} = \texttt{OptimalityTolerance} = \texttt{StepTolerance} =
$10^{-15}$ and \texttt{MaxIterations} = $10^3$ for improved accuracy.
The numerical integration of piecewise polynomials is carried out exactly.
For non-polynomial functions such as $W(\D_h v_h)$ with $v_h \in V_\mathrm{D}(\Mcal)$, the number of chosen quadrature points allows for exact integration of polynomials of degree at most $2pk + 1$ with the growth $p$ of $W$ and the polynomial order $k$ of the discretization.
\begin{figure}[ht]
	\begin{minipage}[b]{0.475\textwidth}
		\centering
		\includegraphics[height=4.6cm]{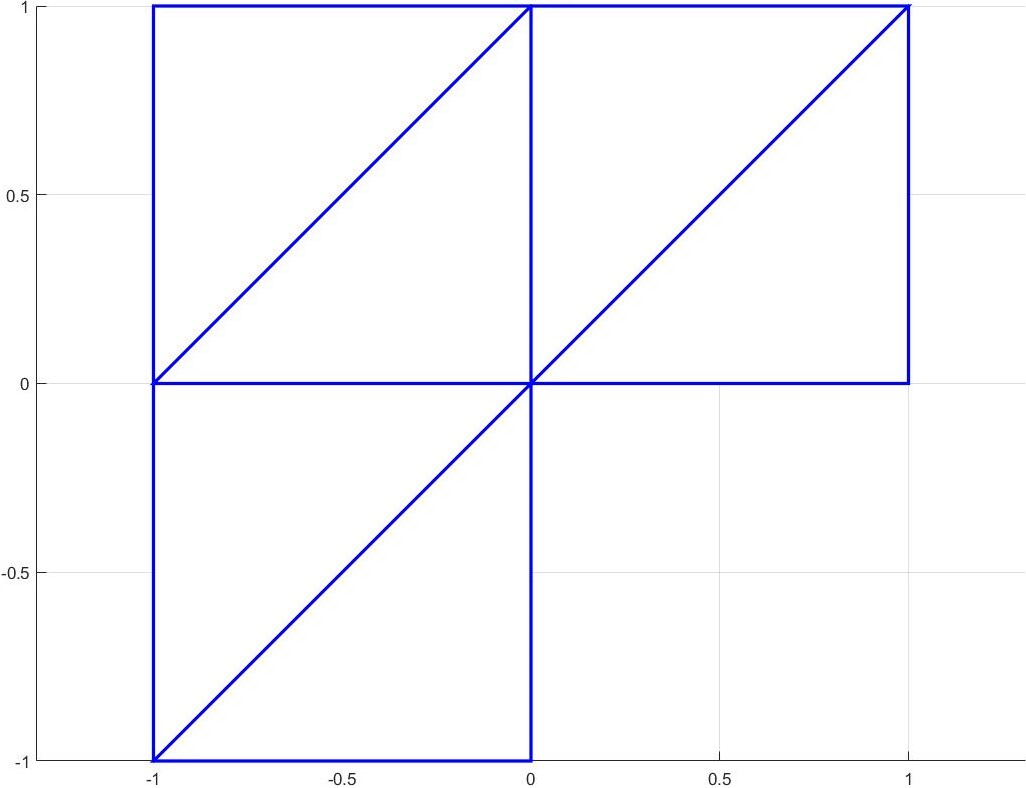}
	\end{minipage}\hfill
	\begin{minipage}[b]{0.475\textwidth}
		\centering
		\includegraphics[height=4.6cm]{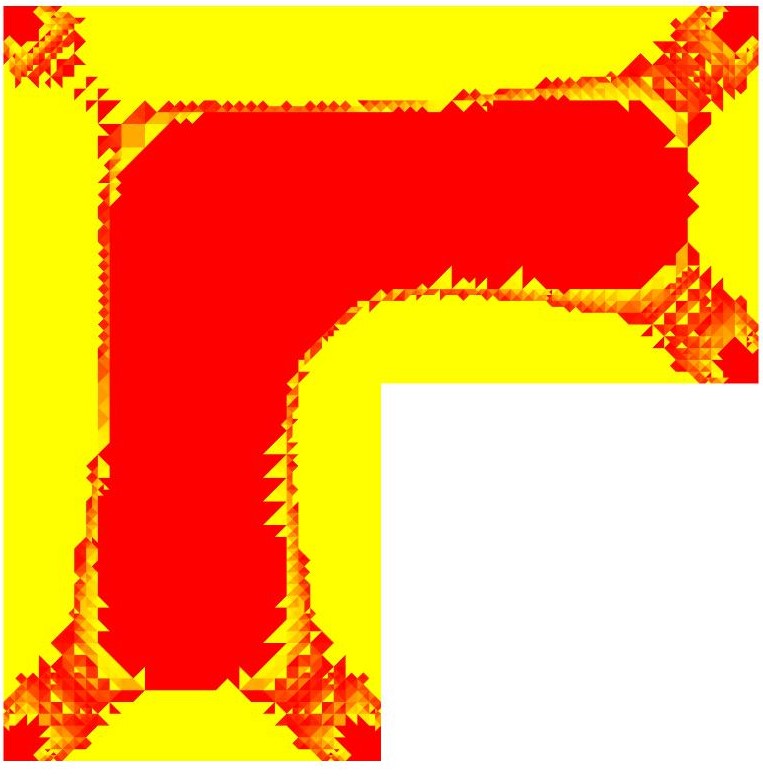}
	\end{minipage}
	\captionsetup{width=1\linewidth}
	\caption{(a) Initial triangulation of the L-shaped domain into 6 triangles and (b) material distribution in the optimal design problem of \Cref{sec:odp}}
	\label{fig:volume_fraction}
\end{figure}
\begin{figure}[ht]
	\begin{minipage}[b]{0.475\textwidth}
		\centering
		\includegraphics[height=5cm]{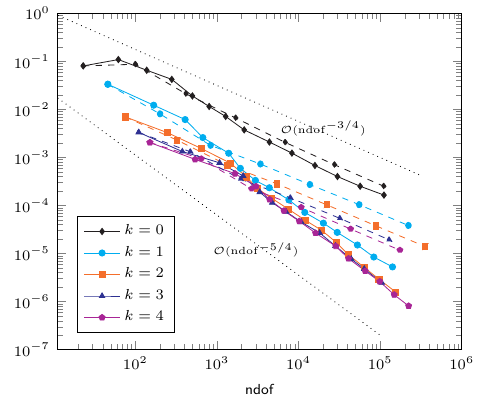}
	\end{minipage}\hfill
	\begin{minipage}[b]{0.475\textwidth}
		\centering
		\includegraphics[height=4.6cm]{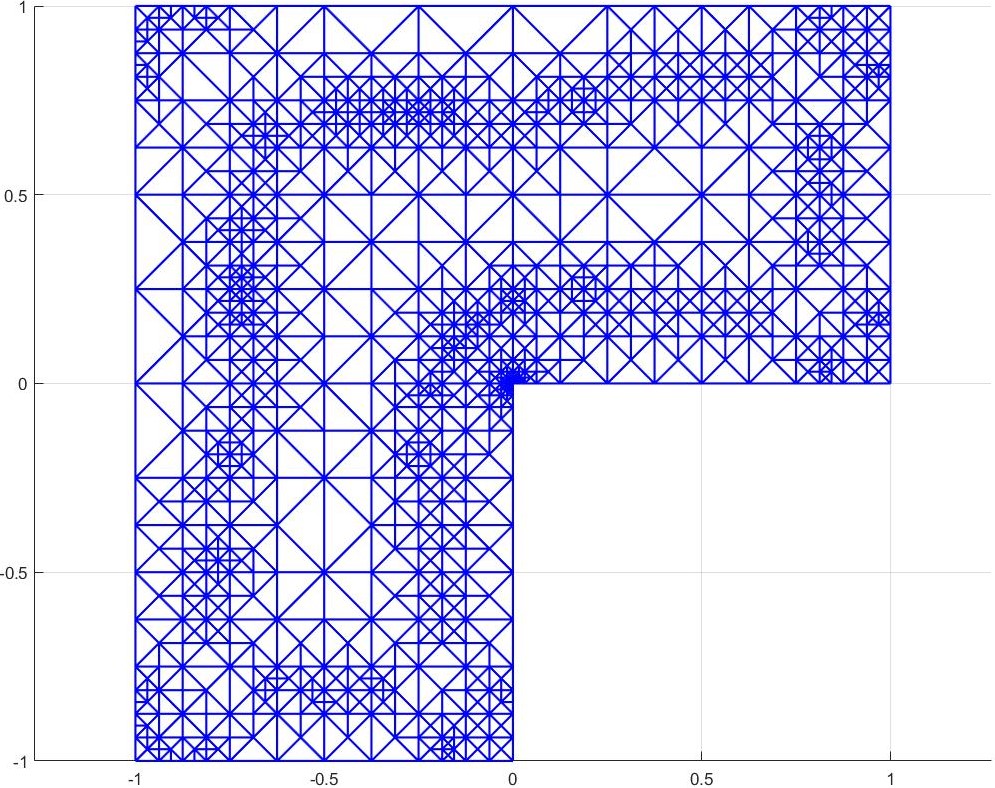}
	\end{minipage}
	\captionsetup{width=1\linewidth}
	\caption{(a) Convergence history plot of $E_{\sigma_0}(v_0) - E^*_{\sigma_0}(\sigma_0)$ for various $k$ and (b) adaptive triangulation into 2013 triangles obtained with $k = 2$ in \Cref{sec:odp}}
	\label{fig:conv_odp}
\end{figure}
\subsection{Optimal design problem}\label{sec:odp}
The optimal design problem seeks the optimal distribution of two materials with fixed amounts to fill a given domain for maximal torsion stiffness \cite{KohnStrang1986,BartelsC2008}. Given parameters $0 < t_1 < t_2$ and $0 < \mu_1 < \mu_2$ with $t_1 \mu_2 = \mu_1 t_2$, the energy density $\Psi(a) \coloneqq w(|a|)$, $a \in \mathbb{R}^n$, with
\begin{align*}
	w(t) \coloneqq \begin{cases}
		\mu_2 t^2/2 &\mbox{if } 0 \leq t \leq t_1,\\
		t_1\mu_2(t - t_1/2) &\mbox{if } t_1 \leq t \leq t_2,\\
		\mu_1 t^2/2 + t_1\mu_2(t_2/2 - t_1/2) &\mbox{if } t_2 \leq t
	\end{cases}
\end{align*}
satisfies \eqref{ineq:2-sided-growth} with $p = 2$ and \eqref{ineq:cc-dual} with $\cnstS{cnst:cc} = 1/(2\mu_2)$ \cite{BartelsC2008}.
This benchmark considers the parameters $\mu_1 = 1$, $\mu_2 = 2$, $t_1 = \sqrt{2\lambda\mu_1/\mu_2}$ for $\lambda = 0.0084$, $t_2 = \mu_2 t_1/\mu_1$ from \cite{BartelsC2008} and the input $r = 2$, $s = 1$ for the stabilization $\s_h$.
Since the data oscillation vanishes,
the a~posteriori error estimate from \eqref{ineq:a-posteriori} reads
\begin{align*}
	e(u,v_0) \coloneqq \frac{1}{2\mu}\|\nabla \Psi(\nabla u) - \nabla \Psi(\nabla v_0)\|_2^2 \leq E_{\sigma_0}(v_0) - E_{\sigma_0}^*(\sigma_0) = E(v_0) - E^*(\sigma_0).
\end{align*}
The approximated material distribution obtained by adaptive computation with $k = 0$ is displayed in \Cref{fig:volume_fraction}(b) using the volume fraction plot from \cite[Section 5]{BartelsC2008}. 
We observe two homogenous phases and a transition layer with a fine mixture of the two materials.
On uniform meshes, the convergence rate $2/3$ for $E_{\sigma_0}(v_0) - E_{\sigma_0}^*(\sigma_0)$ is observed in \Cref{fig:conv_odp}(a).
The adaptive mesh-refining algorithm refines towards the singularity at the origin and the transition layer as displayed \Cref{fig:conv_odp}(b).
This leads to slightly improved convergence rates for $E_{\sigma_0}(v_0) - E_{\sigma_0}^*(\sigma_0)$.

\begin{figure}[ht]
	\begin{minipage}[t]{0.475\textwidth}
		\centering
		\includegraphics[height=5cm]{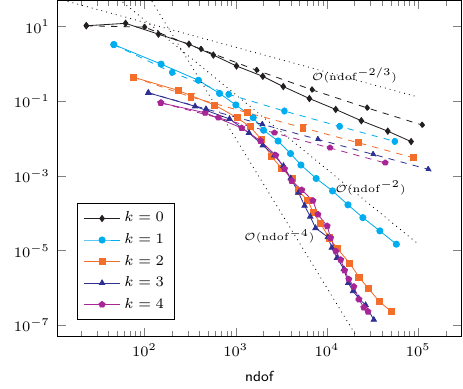}
	\end{minipage}\hfill
	\begin{minipage}[t]{0.475\textwidth}
		\centering
		\includegraphics[height=5cm]{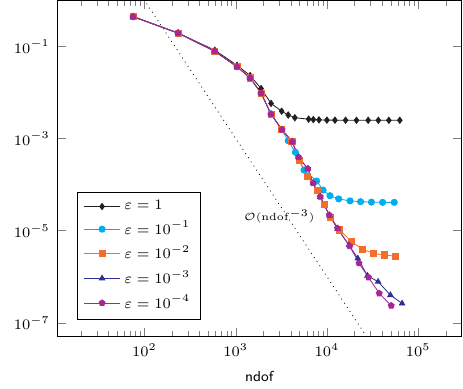}
	\end{minipage}
	\captionsetup{width=1\linewidth}
	\caption{Convergence history plot of $E_{\sigma_0^\varepsilon}(v_0^\varepsilon) - E_{\sigma_0^\varepsilon}^*(\sigma_0^\varepsilon)$ for (a) various $k$ and $\varepsilon = 10^{-4}$ and (b) $k = 2$ and various $\varepsilon$ in \Cref{sec:Bingham-flow}}
	\label{fig:conv_bingham}
\end{figure}

\subsection{Bingham flow in a pipe}\label{sec:Bingham-flow}
Given fixed positive parameters $\mu, g > 0$, the modelling of a uni-directional flow through a pipe with cross-section $\Omega \subset \mathbb{R}^2$ leads to the minimization problem \eqref{def:energy} with the energy density
\begin{align*}
	\Psi(a) \coloneqq \mu|a|^2/2 + g|a| \quad\text{for any } a \in \mathbb{R}^2,
\end{align*} 
cf.~\cite{CarstenReddySchedensack2016} for further details.
Given $\alpha \in \mathbb{R}^2 \setminus \{0\}$, the radial symmetry of $\Psi$ leads to
\begin{align*}
	\Psi^*(\alpha) = \sup_{a \in \mathbb{R}^2} (\alpha \cdot a - \Psi(a)) = \sup_{t \geq 0} (t|\alpha|^2 - \mu t^2 |\alpha|^2/2 - g t |\alpha|) \eqqcolon \varphi_\alpha(t).
\end{align*}
If $|\alpha| \leq g$, then $\varphi_\alpha(t) \leq 0$ and so, $\Psi^*(\alpha) = \sup_{t \geq 0} \varphi_\alpha(t) = \varphi_\alpha(0) = 0$.
If $|\alpha| > g$, then the maximum of $\varphi_\alpha(t)$ in $[0,\infty)$ is attained at $t = (|\alpha| - g)/(\mu|\alpha|)$. Hence,
\begin{align*}
	\Psi^*(\alpha) = \begin{cases}
		0 &\mbox{if } |\alpha| \leq g\\
		(|\alpha| - g)^2/(2 \mu) &\mbox{if } |\alpha| > g.
	\end{cases}
\end{align*}
The strict convexity of $\Psi$ leads to a unique the minimizer $u$ of $E$ in $V_\mathrm{D}$. Although $\Psi$ is not differentiable, there exists $\sigma \in H(\div,\Omega) = W^2(\div,\Omega)$ such that $\sigma \in \partial \Psi(\nabla u)$ and $\div \sigma = -f$ pointwise a.e.~in $\Omega$ \cite[Chapter II, Theorem 6.3]{Glowinski2008}.
Thus, there is no duality gap $E(u) = E^*(\sigma)$.
Furthermore, the minimizer $u$ and any $v \in V_\mathrm{D}$ satisfy $e(u,v) \leq E(v) - E(u)$ for any $v \in V_\mathrm{D}$ with $e(u,v) \coloneqq \frac{\mu}{2}\|\nabla(u - v)\|_2^2$ \cite[Lemma 1]{CarstenReddySchedensack2016}.
Thus, \Cref{rem:data-oscillation} applies and leads, for any $\tau \in W_\mathrm{N}$, to
\begin{align}\label{ineq:a-post-bingham}
	e(u,v)/2 \leq E_{\tau}(v) - E^*_\tau(\tau) + \cnstS{cnst:osc}\osc(\widetilde{f}, \Mcal)^2.
\end{align}
The postprocessings for \eqref{ineq:a-post-bingham} are obtained from a regularized discrete problem as in \cite{CarstenReddySchedensack2016}. Given $\varepsilon > 0$, define $\Psi_\varepsilon \in C^1(\mathbb{R}^2)$ by
\begin{align*}
	\Psi_\varepsilon(a) \coloneqq \mu|a|^2/2 + g(\sqrt{|a|^2 + \varepsilon^2}) \quad\text{for any } a \in \mathbb{R}^2.
\end{align*}
The unique minimizer $u_h^\varepsilon = (u_\Mcal^\varepsilon, u_\Sigma^\varepsilon)$ of the discrete energy
\begin{align*}
	E_h^\varepsilon(v_h) \coloneqq \int_\Omega (\Psi_\varepsilon(\D_h v_h) - f_h v_\Mcal) \d{x} + \s_h(v_h)/2 \quad\text{among } v_h = (v_\Mcal, v_\Sigma) \in V_\mathrm{D}(\Mcal)
\end{align*}
allows for the postprocessings $\sigma_0^\varepsilon \in W_\mathrm{N}$ with $\div \sigma_0^\varepsilon = -f_h$ from \Cref{lem:discrete-dual-variable} and $v_0^\varepsilon \in S^{k+1}_\mathrm{D}(\Mcal)$ as the nodal average of $u_\Mcal^\varepsilon$. From \eqref{ineq:a-post-bingham}, we deduce
\begin{align*}
	e(u,v_0^\varepsilon)/2 \leq E_{\sigma_0^\varepsilon}(v_0^\varepsilon) - E_{\sigma_0^\varepsilon}^*(\sigma_0^\varepsilon) + \cnstS{cnst:osc}\osc(f,\Mcal)^2.
\end{align*}
In the following computer experiment, we set $\mu = 1$, $g = 0.2$, $f \equiv 10$ with the reference minimal energy $E(u) = -9.32049$ obtained from the computable bounds \eqref{ineq:LEB} in adaptive computations and $r = 2$, $s = 1$ for the stabilization $\s_h$.
On uniform meshes, the convergence rate $2/3$ for $E(v_0^\varepsilon) - E^*(\sigma_0^\varepsilon)$ is observed in \Cref{fig:conv_bingham}(a) for all polynomial degrees and $\varepsilon = 10^{-4}$.
The adaptive algorithm refines towards the expected singularity at the origin and leads to the optimal convergence rates $k+1$ for all displayed polynomial degree $k$ until stagnation due to the regularization.
The effect of the latter is displayed in \Cref{fig:conv_bingham}(b) for the polynomial degree $k = 2$ and parameters $\varepsilon = 10^0, 10^{-1}, \dots, 10^{-4}$.

\begin{figure}[ht]
	\begin{minipage}[t]{0.475\textwidth}
		\centering
		\includegraphics[height=5cm]{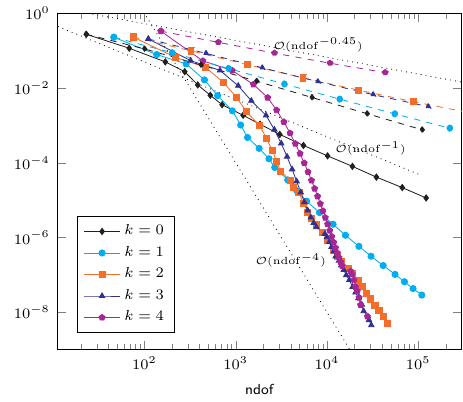}
	\end{minipage}\hfill
	\begin{minipage}[t]{0.475\textwidth}
		\centering
		\includegraphics[height=5cm]{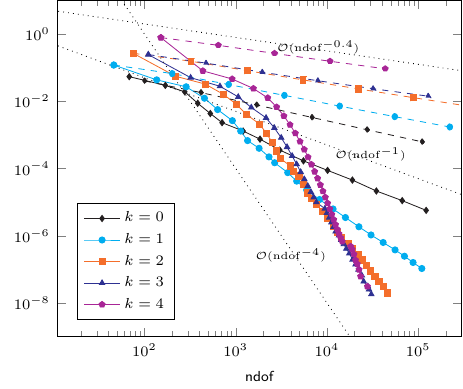}
	\end{minipage}
	\captionsetup{width=1\linewidth}
	\caption{Convergence history plot of (a) RHS and (b) $\|\sqrt{\varrho}\nabla(u - v_0)\|^2_2$ for various $k$ in \Cref{sec:plaplace}}
	\label{fig:conv_plaplace_1}
\end{figure}
\begin{figure}[ht]
	\begin{minipage}[t]{0.475\textwidth}
		\centering
		\includegraphics[height=5cm]{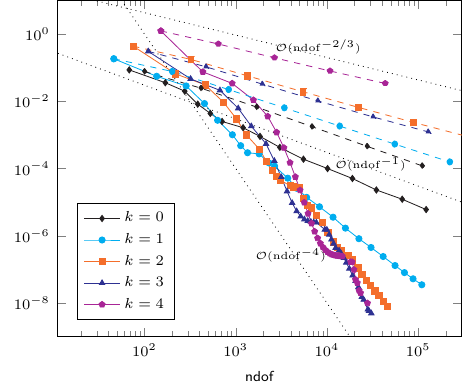}
	\end{minipage}\hfill
	\begin{minipage}[t]{0.475\textwidth}
		\centering
		\includegraphics[height=5cm]{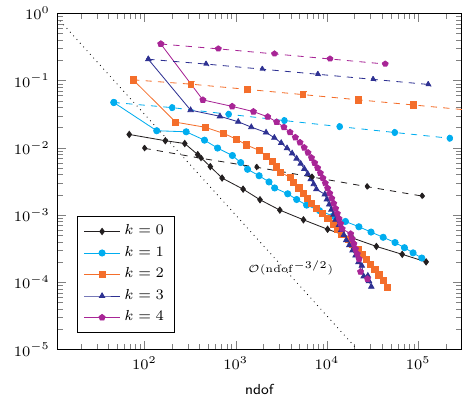}
	\end{minipage}
	\captionsetup{width=1\linewidth}
	\caption{Convergence history plot of (a) $\|\sigma - \nabla \Psi(\nabla v_0)\|_{4/3}^2$ and (b) $\|\nabla(u - v_0)\|_4^2$ for various $k$ in \Cref{sec:plaplace}}
	\label{fig:conv_plaplace_2}
\end{figure}

\subsection{$p$-Laplace problem}\label{sec:plaplace}
In this final benchmark, we consider the $4$-Laplace problem from \Cref{rem:p-Laplace} with the exact solution
\begin{align*}
	u(r,\varphi) = r^{7/8}\sin(7\varphi/8) \in W^{1,4}(\Omega)
\end{align*}
and the right-hand side
\begin{align*}
	f(r,\varphi) = (7/8)^{3/4}r^{-11/8}\sin(7\varphi/8) \in L^{16/11-\varepsilon}(\Omega)
\end{align*}
for any $\varepsilon > 0$ from \cite{CKlose2003}.
Some remarks are in order due to the inhomogenous Dirichlet data prescribed by $u$.
The discrete problem minimizes \eqref{def:discrete-energy} in the affine space $\I_V u + V_\mathrm{D}(\Mcal)$.
Since the construction of $\sigma_0$ in \Cref{lem:discrete-dual-variable} only relies on the discrete Euler-Lagrange equations, it applies verbatim.
To obtain a postprocessing $v_0 \in u + V_\mathrm{D}$, let $\widetilde{v}_0 \in S^{k+1}_\mathrm{D}(\Mcal)$ denote the nodal average of $u_\Mcal - \Pi_\Mcal^{k+1} u$. We set $v_0 \coloneqq \widetilde{v}_0 + u$.
(Notice that, in the computation of $v_0$, $u$ can be replaced by any extension of the given Dirichlet data $u|_{\partial \Omega}$.)
The coercivity of $\Psi$ from \Cref{rem:p-Laplace} and \Cref{rem:data-oscillation} proves
\begin{align*}
	&e(u,v_0) \coloneqq \|\nabla(u - v_0)\|_4^4 + \|\sigma - \nabla \Psi(\nabla v_0)\|_{4/3}^2\\
	&\qquad + \|\sqrt{\varrho}\nabla(u - v_0)\|^2_2 \lesssim E_{\sigma_0}(v_0) - E^*_{\sigma_0}(\sigma_0) + \osc_4(f,v_0,\Mcal)^2 \eqqcolon \mathrm{RHS}
\end{align*}
with $\varrho \coloneqq (|\nabla u| + |\nabla v_0|)^{2}$ and the dual energy
\begin{align*}
	E_{\sigma_0}^*(\sigma_0) = - \int_\Omega \Psi^*(\sigma_0) \d{x} + \int_{\Gamma_\mathrm{D}} u \sigma_0 \cdot \nu \d{s}.
\end{align*}
In view of the analysis in \cite{DieningKreuzer2008,kaltenbach2022} for lowest-order methods, we expect $h_{\max}^{2(k+1)}$ as the best possible convergence rate for the primal-dual gap $E_{\sigma_0}(v_0) - E_{\sigma_0}^*(\sigma_0)$.
To achieve this, the stabilization $\s_h$ should -- at least in theory -- scale quadratically. This motivates the input $r = 2$ and $s = 1$ for $\s_h$ in this benchmark.
\Cref{fig:conv_plaplace_1}--\ref{fig:conv_plaplace_2} display a convergence rate between $0.45$ and $2/3$ for RHS, $\|\sqrt{\varrho}\nabla(u - v_0)\|_2^2$, and $\|\sigma - \nabla \Psi(\nabla v_0)\|_{4/3}^2$, but only a marginal improvement of $\|\nabla(u - v_0)\|_4^2$ in the preasymtotic range on uniform meshes.
Adaptive computation refines towards the singularity at the origin and recovers the optimal convergence rates $k+1$ for RHS, $\|\sqrt{\varrho}\nabla(u - v_0)\|_2^2$, and $\|\sigma - \nabla W(\nabla v_0)\|_{4/3}^2$ with all displayed $k$.
For the displacement error $\|\nabla(u - v_0)\|_4^2$, we observe the empirical convergence rates $(k+1)/2$ for $k \geq 2$.
In undisplayed computer experiments with the input $r = p = 4$, the optimal convergence rates $2(k+1)$ is observed for all displayed errors of this subsection for the choice $s \geq (p-2)(k+1) + p - 1$. (Notice from \eqref{ineq:proof-convergence-rates-2} and \eqref{ineq:convergence-rates-stab} that $s = (p-2)(k+1) + p - 1$ is the unique parameter with $\s_h(\I_V u) + \gamma_h(\I_W \sigma) \lesssim h_{\max}^{2(k+1)}$ for smooth $u$ and $\sigma$).

\subsection{Conclusions}
In all computer experiments, adaptive computation leads to improved convergence rates for the a~posteriori error estimator compared to uniform mesh refinements.
Higher polynomial degrees provide additional improvement, which depends on the model problem. For the Bingham flow and the $p$-Laplace problem, we observe the optimal convergence rates $\mathrm{ndof}^{-(k+1)}$. This appears to be sensitive to the choice of the parameters $r$ and $s$ in the stabilization $\s_h$ if $p \neq 2$.

%% bibliography
\bibliographystyle{amsplain}
\bibliography{references.bib}

\providecommand{\bysame}{\leavevmode\hbox to3em{\hrulefill}\thinspace}
\providecommand{\MR}{\relax\ifhmode\unskip\space\fi MR }
% \MRhref is called by the amsart/book/proc definition of \MR.
\providecommand{\MRhref}[2]{%
  \href{http://www.ams.org/mathscinet-getitem?mr=#1}{#2}
}
\providecommand{\href}[2]{#2}
\begin{thebibliography}{10}

\bibitem{AbbasErnPignet2018}
M.~Abbas, A.~Ern, and N.~Pignet, \emph{Hybrid high-order methods for finite
  deformations of hyperelastic materials}, Comput. Mech. \textbf{62} (2018),
  no.~4, 909--928.

\bibitem{AlbertyCFunken1999}
J.~Alberty, C.~Carstensen, and S.~A. Funken, \emph{Remarks around 50 lines of
  {M}atlab: short finite element implementation}, Numer. Algorithms \textbf{20}
  (1999), no.~2-3, 117--137.

\bibitem{BarrettLiu1993}
J.~W. Barrett and W.~B. Liu, \emph{Finite element approximation of the
  {$p$}-{L}aplacian}, Math. Comp. \textbf{61} (1993), no.~204, 523--537.

\bibitem{Bartels2015}
S.~Bartels, \emph{Numerical methods for nonlinear partial differential
  equations}, Springer Series in Computational Mathematics, vol.~47, Springer,
  Cham, 2015.

\bibitem{Bartels2021}
\bysame, \emph{Error estimates for a class of discontinuous {G}alerkin methods
  for nonsmooth problems via convex duality relations}, Math. Comp. \textbf{90}
  (2021), no.~332, 2579--2602.

\bibitem{Bartels2021b}
\bysame, \emph{Nonconforming discretizations of convex minimization problems
  and precise relations to mixed methods}, Comput. Math. Appl. \textbf{93}
  (2021), 214--229.

\bibitem{BartelsC2008}
S.~Bartels and C.~Carstensen, \emph{A convergent adaptive finite element method
  for an optimal design problem}, Numer. Math. \textbf{108} (2008), no.~3,
  359--385.

\bibitem{BartelsKaltenbach2023}
S.~Bartels and A.~Kaltenbach, \emph{Explicit and efficient error estimation for
  convex minimization problems}, Math. Comp. \textbf{92} (2023), no.~343,
  2247--2279.

\bibitem{Bebendorf2003}
M.~Bebendorf, \emph{A note on the {P}oincar\'{e} inequality for convex
  domains}, Z. Anal. Anwendungen \textbf{22} (2003), no.~4, 751--756.

\bibitem{BottiDiPietroSochala2017}
Michele Botti, Daniele~A. Di~Pietro, and Pierre Sochala, \emph{A hybrid
  high-order method for nonlinear elasticity}, SIAM J. Numer. Anal. \textbf{55}
  (2017), no.~6, 2687--2717. \MR{3721565}

\bibitem{BraessSchoeberl2008}
D.~Braess and J.~Sch\"{o}berl, \emph{Equilibrated residual error estimator for
  edge elements}, Math. Comp. \textbf{77} (2008), no.~262, 651--672.

\bibitem{CarstensenFeischlPagePraetorius2014}
C.~Carstensen, M.~Feischl, M.~Page, and D.~Praetorius, \emph{Axioms of
  adaptivity}, Comput. Math. Appl. \textbf{67} (2014), no.~6, 1195--1253.

\bibitem{CKlose2003}
C.~Carstensen and R.~Klose, \emph{A~posteriori finite element error control for
  the p-{L}aplace problem}, SIAM J. Sci. Comput. \textbf{25} (2003), 792--814.

\bibitem{CarstensenLiu2015}
C.~Carstensen and D.~J. Liu, \emph{Nonconforming {FEM}s for an optimal design
  problem}, SIAM J. Numer. Anal. \textbf{53} (2015), no.~2, 874--894.

\bibitem{CLiu2015}
\bysame, \emph{Nonconforming {FEM}s for an optimal design problem}, SIAM J.
  Numer. Anal. \textbf{53} (2015), no.~2, 874--894.

\bibitem{CPlechac1997}
C.~Carstensen and P.~Plech\'a\v{c}, \emph{Numerical solution of the scalar
  double-well problem allowing microstructure}, Math. Comp. \textbf{66} (1997),
  no.~219, 997--1026.

\bibitem{CarstenReddySchedensack2016}
C.~Carstensen, B.~D. Reddy, and M.~Schedensack, \emph{A natural nonconforming
  {FEM} for the {B}ingham flow problem is quasi-optimal}, Numer. Math.
  \textbf{133} (2016), no.~1, 37--66.

\bibitem{CarstensenTran2021}
C.~Carstensen and N.~T. Tran, \emph{Unstabilized {H}ybrid {H}igh-order {M}ethod
  for a {C}lass of {D}egenerate {C}onvex {M}inimization {P}roblems}, SIAM J.
  Numer. Anal. \textbf{59} (2021), no.~3, 1348--1373.

\bibitem{CarstensenTran2022}
\bysame, \emph{Convergent adaptive hybrid higher-order schemes for convex
  minimization}, Numer. Math. \textbf{151} (2022), no.~2, 329--367.

\bibitem{Chow1989}
S.-S. Chow, \emph{Finite element error estimates for non-linear elliptic
  equations of monotone type}, Numer. Math. \textbf{54} (1989), 373--393.

\bibitem{ChuaWeeden2006}
S.-K. Chua and R.~L. Wheeden, \emph{Estimates of best constants for weighted
  {P}oincar\'{e} inequalities on convex domains}, Proc. London Math. Soc. (3)
  \textbf{93} (2006), no.~1, 197--226.

\bibitem{CockburnDiPietroErn2016}
B.~Cockburn, D.~A. Di~Pietro, and A.~Ern, \emph{Bridging the hybrid high-order
  and hybridizable discontinuous {G}alerkin methods}, ESAIM Math. Model. Numer.
  Anal. \textbf{50} (2016), no.~3, 635--650.

\bibitem{DiPietroDroniou2017}
D.~A. Di~Pietro and J.~Droniou, \emph{A hybrid high-order method for
  {L}eray-{L}ions elliptic equations on general meshes}, Math. Comp.
  \textbf{86} (2017), no.~307, 2159--2191.

\bibitem{DiPietroDroniou2020}
\bysame, \emph{The hybrid high-order method for polytopal meshes}, MS\&A.
  Modeling, Simulation and Applications, vol.~19, Springer, Cham, 2020, Design,
  analysis, and applications.

\bibitem{DiPietroDroniou2021}
D.~A. Di~Pietro, J.~Droniou, and A.~Harnist, \emph{Improved error estimates for
  hybrid high-order discretizations of {L}eray-{L}ions problems}, Calcolo
  \textbf{58} (2021), no.~2, Paper No. 19, 24.

\bibitem{DiPietroErn2012}
D.~A. Di~Pietro and A.~Ern, \emph{Mathematical aspects of discontinuous
  {G}alerkin methods}, Math\'{e}matiques \& Applications, vol.~69, Springer,
  Heidelberg, 2012.

\bibitem{DiPietroErn2015}
\bysame, \emph{A hybrid high-order locking-free method for linear elasticity on
  general meshes}, Comput. Methods Appl. Mech. Engrg. \textbf{283} (2015),
  1--21.

\bibitem{DiPietroErnLemaire2014}
D.~A. Di~Pietro, A.~Ern, and S.~Lemaire, \emph{An arbitrary-order and
  compact-stencil discretization of diffusion on general meshes based on local
  reconstruction operators}, Comput. Methods Appl. Math. \textbf{14} (2014),
  no.~4, 461--472.

\bibitem{DiPietroDroniou2023}
Daniele~A. Di~Pietro and J\'{e}r\^{o}me Droniou, \emph{An arbitrary-order
  discrete de {R}ham complex on polyhedral meshes: exactness, {P}oincar\'{e}
  inequalities, and consistency}, Found. Comput. Math. \textbf{23} (2023),
  no.~1, 85--164. \MR{4546145}

\bibitem{DiPietroDroniouRapetti2020}
Daniele~A. Di~Pietro, J\'{e}r\^{o}me Droniou, and Francesca Rapetti,
  \emph{Fully discrete polynomial de {R}ham sequences of arbitrary degree on
  polygons and polyhedra}, Math. Models Methods Appl. Sci. \textbf{30} (2020),
  no.~9, 1809--1855. \MR{4151796}

\bibitem{DieningKoenerRuzickaToulopoulos2014}
L.~Diening, D.~K\"{o}ner, M.~R{$\mathring{\text{u}}$}\v{z}i\v{c}ka, and
  I.~Toulopoulos, \emph{A local discontinuous {G}alerkin approximation for
  systems with {$p$}-structure}, IMA J. Numer. Anal. \textbf{34} (2014), no.~4,
  1447--1488.

\bibitem{DieningKreuzer2008}
L.~Diening and C.~Kreuzer, \emph{Linear convergence of an adaptive finite
  element method for the {$p$}-{L}aplacian equation}, SIAM J. Numer. Anal.
  \textbf{46} (2008), no.~2, 614--638.

\bibitem{DieningRuzicka2007}
L.~Diening and M.~R{$\mathring{\text{u}}$}\v{z}i\v{c}ka, \emph{Interpolation
  operators in {O}rlicz-{S}obolev spaces}, Numer. Math. \textbf{107} (2007),
  no.~1, 107--129.

\bibitem{ErnGuermond2021}
A.~Ern and J.-L. Guermond, \emph{Finite elements {I}---{A}pproximation and
  interpolation}, Texts in Applied Mathematics, vol.~72, Springer, Cham, 2021.

\bibitem{ErnVohralik2015}
A.~Ern and M.~Vohral\'{\i}k, \emph{Polynomial-degree-robust a posteriori
  estimates in a unified setting for conforming, nonconforming, discontinuous
  {G}alerkin, and mixed discretizations}, SIAM J. Numer. Anal. \textbf{53}
  (2015), no.~2, 1058--1081.

\bibitem{ErnVohralik2020}
\bysame, \emph{Stable broken {$H^1$} and {$H({\rm div})$} polynomial extensions
  for polynomial-degree-robust potential and flux reconstruction in three space
  dimensions}, Math. Comp. \textbf{89} (2020), no.~322, 551--594.

\bibitem{Glowinski2008}
R.~Glowinski, \emph{Numerical methods for nonlinear variational problems},
  Scientific Computation, Springer-Verlag, Berlin, 2008, Reprint of the 1984
  original.

\bibitem{GlowinskiMarrocco1975}
R.~Glowinski and A.~Marrocco, \emph{Sur l'approximation, par \'{e}l\'{e}ments
  finis d'ordre un, et la r\'{e}solution, par p\'{e}nalisation-dualit\'{e},
  d'une classe de probl\`emes de {D}irichlet non lin\'{e}aires}, Rev.
  Fran\c{c}aise Automat. Informat. Recherche Op\'{e}rationnelle S\'{e}r. Rouge
  Anal. Num\'{e}r. \textbf{9} (1975), no.~no. {\rm R}-2, 41--76.

\bibitem{kaltenbach2022}
A.~Kaltenbach, \emph{Error analysis for a crouzeix-raviart approximation of the
  $ p $-dirichlet problem}, arXiv:2210.12116 (2022).

\bibitem{KohnStrang1986}
R.~V. Kohn and G.~Strang, \emph{Optimal design and relaxation of variational
  problems. {I}}, Comm. Pure Appl. Math. \textbf{39} (1986), no.~1, 113--137.

\bibitem{Marini1985}
L.~D. Marini, \emph{An inexpensive method for the evaluation of the solution of
  the lowest order {R}aviart-{T}homas mixed method}, SIAM J. Numer. Anal.
  \textbf{22} (1985), no.~3, 493--496.

\bibitem{Repin2000}
S.~I. Repin, \emph{A posteriori error estimation for variational problems with
  uniformly convex functionals}, Math. Comp. \textbf{69} (2000), no.~230,
  481--500.

\bibitem{Rockafellar1970}
R.~T. Rockafellar, \emph{Convex analysis}, Princeton Mathematical Series, No.
  28, Princeton University Press, Princeton, N.J., 1970.

\bibitem{Tyukhtin1982}
V.~B. Tyukhtin, \emph{The rate of convergence of approximation methods for
  solving one-sided variational problems. {I}}, Teoret. Mat. Fiz. \textbf{51}
  (1982), no.~2, 111--113, 121.

\end{thebibliography}

\end{document}